\numberwithin{equation}{subsection}
\theoremstyle{plain}
\newtheorem{thm}[subsection]{Theorem}
\theoremstyle{definition}
\newtheorem{defn}[subsection]{Definition}
\newtheorem{example}[subsection]{Question}
\newtheorem{remark}[subsection]{Remark}
\theoremstyle{plain}
\newtheorem{prop}[subsection]{Proposition}
\newtheorem{lemma}[subsection]{Lemma}
\theoremstyle{cor}
\newtheorem{cor}[subsection]{Corollary}
\theoremstyle{definition}
\theoremstyle{remark}
\newcommand{\X}{\mathcal{X}}
\newcommand{\Y}{\mathcal{Y}}
\DeclareMathOperator{\id}{id}
\DeclareMathOperator{\A}{\mathbb{A}}
\DeclareMathOperator{\N}{\mathbb{N}}
\DeclareMathOperator{\Z}{\mathbb{Z}}
\DeclareMathOperator*{\Q}{\mathbb{Q}}
\newcommand{\OO}{\mathscr{O}}
\DeclareMathOperator{\Hom}{Hom}
\DeclareMathOperator{\chr}{char}
\DeclareMathOperator{\Aut}{Aut}
\DeclareMathOperator{\Autt}{\mathbf{Aut}}
\DeclareMathOperator{\Isom}{\mathbf{Isom}}
\DeclareMathOperator{\Homm}{\mathbf{Hom}}
\DeclareMathOperator{\Hilb}{Hilb}
\DeclareMathOperator{\Spec}{Spec}
\DeclareMathOperator{\Spf}{Spf}
\DeclareMathOperator{\Def}{Def}
\newcommand{\RR}{\text{R}}
\DeclareMathOperator{\Sch}{\mathsf{Sch}}
\newcommand{\shT}{\mathscr{T}}
\DeclareMathOperator{\Art}{\mathsf{Art}}
\DeclareMathOperator{\pr}{pr}
\newcommand{\Ext}{\mathcal{E}xt}
\newcommand{\XX}{\mathcal{X}}
\newcommand{\Sets}{\mathsf{Set}}
\newcommand{\dd}{\textrm{d}}
\newcommand{\UU}{\mathcal{U}}
\DeclareMathOperator{\ch}{char}
\DeclareMathOperator{\PP}{\mathbb{P}}
\DeclareMathOperator{\Coh}{\textsf{Coh}}
\newcommand{\I}{\mathcal{I}}
\DeclareMathOperator{\colim}{colim}
\DeclareMathOperator{\trdeg}{tr.deg}
\DeclareMathOperator{\KS}{KS}
\DeclareMathOperator{\Hilbb}{\mathbf{Hilb}}
\DeclareMathOperator{\Isomm}{Isom}
\DeclareMathOperator{\Proj}{Proj}
\title{An algebraic variant of the Fischer--Grauert Theorem}
\author{Paweł Poczobut}
\address{Institute of Mathematics, University of Warsaw, ul. Banacha 2, 02-097 Warszawa, Poland}
\email{p.poczobut@student.uw.edu.pl}
\begin{document}
\hypersetup{linkcolor=Teal}
\hypersetup{citecolor=Teal}
\hypersetup{urlcolor=Teal}
\maketitle
\begin{abstract}
	 A well-known theorem of W. Fischer and H. Grauert states that analytic fiber spaces with all fibers isomorphic to a fixed compact connected complex manifold are locally trivial. Motivated by this result, we show that if $k$ is an algebraically closed field of infinite transcendence degree over its prime field, then every smooth projective family over a reduced $k$-scheme of finite type with isomorphic fibers having reduced automorphism group schemes is locally trivial in the \'etale topology. We do so by reducing the problem to the case when the base is a smooth integral curve, and then, using the vanishing of the Kodaira--Spencer map, we prove formal triviality of such families at every geometric point of the base. We also provide examples of smooth projective fibrewise trivial families in positive characteristic whose Kodaira--Spencer map are nowhere vanishing.
\end{abstract}
\section{Introduction}
Suppose $f\colon\XX\to S$ is a morphism of complex manifolds such that for every $s\in S$, the fibre $\XX_s:=f^{-1}(s)$ is analytically isomorphic to a fixed complex manifold $X$. It is natural to investigate to what extent the family $f$ is, locally on $S$, isomorphic to the trivial family. In 1965, Wolfgang Fischer and Hans Grauert gave the following answer to this question.

\begin{thm}[{\cite{FG}}]\label{fischergrauert}
	Let $f :\XX\rightarrow S$ be a proper submersion of complex manifolds. Suppose that the fibres $\XX_s$ are analytically isomorphic to a fixed complex manifold $X$. Then, $f$ is locally trivial, i.e. for every $s\in S$ there exists an open neighborhood $s\in U_s\subset S$ such that $\XX_{U_s}:=f^{-1}(U_s)$ and $X\times U_s$ are isomorphic as analytic fibre bundles over $U_s$.
\end{thm}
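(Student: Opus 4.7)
The plan is to prove local triviality around an arbitrary point $s_{0}\in S$ by constructing a holomorphic section of the relative isomorphism space
\[
\pi\colon I := \Isomm_{S}(X\times S,\,\XX)\longrightarrow S,
\]
which, by the analytic representability theorem of Douady (applicable because $f$ is proper), is a complex analytic space over $S$. The assumption that every fibre of $f$ is isomorphic to $X$ translates into set-theoretic surjectivity of $\pi$, and a holomorphic local section of $\pi$ through any chosen $\varphi_{0}\in\pi^{-1}(s_{0})$ is precisely the datum of a local trivialization of $f$ near $s_{0}$.

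To produce such a section via the analytic implicit function theorem, it suffices to show that $\pi$ is submersive at $\varphi_{0}$. Deformation theory identifies a neighbourhood of $\varphi_{0}$ in the fibre $\pi^{-1}(s_{0})$ with a neighbourhood of the identity in $\Aut(X)$, which is a finite-dimensional complex Lie group (since $X$ is compact) with Lie algebra $\HH^{0}(X,T_{X})$. The obstruction to submersiveness of $\pi$ at $\varphi_{0}$ is exactly the Kodaira--Spencer map
\[
\kappa(s_{0})\colon T_{S,s_{0}}\longrightarrow \HH^{1}(\XX_{s_{0}},T_{\XX_{s_{0}}}),
\]
so the crucial step reduces to showing that $\kappa(s_{0})=0$.

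The vanishing of $\kappa$ is where the hypothesis of fibrewise isomorphism is used, and I expect it to be the main obstacle of the argument. Isomorphism of individual fibres does not automatically yield a first-order splitting: the local choices $\varphi_{s}\colon X\to\XX_{s}$ are defined only modulo $\Aut(X)$ and need not vary holomorphically in $s$. The approach is to choose continuous local lifts of $\pi$, differentiate them along $S$, and exhibit the resulting \v{C}ech cocycle representing $\kappa(s_{0})$ as an explicit coboundary, absorbing the ambiguity into the $\Aut(X)$-action; this relies crucially on $\Aut(X)$ being a complex Lie group for $X$ compact. Grauert's coherence theorem is then invoked to organise the groups $\HH^{i}(\XX_{s},T_{\XX_{s}})$ into coherent sheaves on $S$ and to guarantee good behaviour of $\kappa$ in families. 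The essential analytic ingredients of the proof are therefore the representability of $\Isomm$, the analytic implicit function theorem, Grauert's coherence theorem, and the Lie-theoretic structure of $\Aut(X)$ --- none of which transfer directly to the algebraic setting, motivating the different route taken in this paper.
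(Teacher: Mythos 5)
First, note that the paper does not prove this statement at all: Theorem \ref{fischergrauert} is quoted from Fischer--Grauert \cite{FG} as motivation, and the paper's own work concerns the algebraic analogue (Theorem \ref{mainthm}). So there is no proof in the paper to compare yours against line by line; your proposal has to be judged on its own, and as it stands it has a genuine gap precisely at the step you yourself flag as the main obstacle. The hypothesis gives, for each $s$, \emph{some} biholomorphism $X\to\XX_s$, with no regularity whatsoever in $s$: these isomorphisms need not admit even a continuous selection a priori. Your proposed mechanism --- ``choose continuous local lifts of $\pi$, differentiate them along $S$'' --- therefore does not get off the ground: the existence of such lifts is not established, a merely continuous lift cannot be differentiated, and if you could choose a fibrewise-biholomorphic trivialization varying smoothly in $s$ you would already be essentially done (pulling back the complex structures makes them constant), so the argument as sketched is close to circular. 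The real content of Fischer--Grauert is exactly how to extract regularity from the pointwise hypothesis; the standard routes are either the original power-series/direct-image argument of \cite{FG}, or an argument on the Douady space $\Isomm_S(X\times S,\XX)$ using that it has countably many components whose images cannot all be thin (a Baire-category statement that is the analytic counterpart of Proposition \ref{countablecover} and Theorem \ref{genericisotriviality} here), followed by propagation from a dense open set. Your phrase ``absorbing the ambiguity into the $\Aut(X)$-action'' names the difficulty but supplies no mechanism for it.

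Two further points would need care even granting the vanishing of $\KS$. Submersiveness of $\pi$ at a single $\varphi_0$ requires more than $\kappa(s_0)=0$ at one point: you need the relevant deformation problem to be unobstructed (here the fibres of $\pi$ are $\Aut(X)$-torsors, hence smooth since $\Aut(X)$ is a complex Lie group --- this is the analytic avatar of the reducedness hypothesis on $\Aut^X_k$ in Theorem \ref{mainthm}) and you need the vanishing on a neighbourhood, not just at $s_0$, to conclude anything beyond a first-order statement; compare the amount of work in Section \ref{mainresult1}, where pointwise vanishing is upgraded to formal triviality by an inductive cocycle computation and then to isotriviality via Grothendieck existence and Artin approximation. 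Finally, Section \ref{KSnotzero} of the paper shows that fibrewise triviality alone does \emph{not} force the Kodaira--Spencer map to vanish when the automorphism group scheme is non-reduced; this is a useful sanity check that any correct argument for the vanishing must genuinely use the Lie-group structure of $\Aut(X)$ (or characteristic $0$), and your sketch does not yet make that usage precise.
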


Generalizations of this result are known in the context of complex spaces (in the sense of Grauert). An analogous statement when $f\colon\XX\to S$ is a flat proper morphism of complex spaces with $S$ reduced was proved Schuster \cite[Satz 4.9]{schuster}. Other results may also be found e.g. in a work of Donin \cite[3.1, 3.2]{donin}.
\medskip

One may expect that a theorem of this kind should have an analog in algebraic geometry. In this paper, we show that it is indeed the case. Before we state our main result, let us introduce some terminology that we use throughout this work.
\begin{defn}\label{fibtrivial}
	Let $k$ be an algebraically closed field and let $X$ be a $k$-scheme. A flat morphism of finite type $k$-schemes $f\colon\XX\rightarrow S$ is \textbf{fibrewise trivial} (with fibre $X$), if there exists an isomorphism $\XX_s=\XX\times_S s\simeq X$ for every $s\in S(k)$. A fibrewise trivial family $f$ with fibre $X$ is \textbf{trivial} if $\XX$ and $X\times_k S$ are isomorphic as $S$-schemes. A family $f$ is {\bf Zariski-locally trivial} if for every $s\in S(k)$ there exists an open neighborhood $s\in U\subset S$ such that $f_U\colon\XX_{U}\to U$ is trivial. We say that a morphism of finite type $k$-schemes $f\colon\XX\rightarrow S$ is \textbf{isotrivial at} $s\in S(k)$ if there exists an étale neighborhood $V\rightarrow S$ of $s$ such that $f_V\colon\XX\times_S V\rightarrow V$ is trivial. We call $f$ \textbf{isotrivial} if it is isotrivial at every $s\in S(k)$.
\end{defn}

It is well-known that an analog of Theorem \ref{fischergrauert} does not hold if we interpret the concept of local triviality in the naïve manner, i.e. a fibrewise trivial family $f\colon\XX\to S$ may not be Zariski-locally trivial. A standard example is an elliptic scheme $p\colon\mathscr{E}\subset \PP^2_S=\Proj \OO_S[X,Y,Z]\rightarrow S$ cut out by the equation $Y^2Z-X^3-tZ^3=0$. Clearly $\XX/S$ is fibrewise trivial. 
The generic fibre $\mathscr{E}_\eta$ of $\mathscr{E}$ is an elliptic curve over $\mathbb{C}(t)$ defined by the equation $Y^2Z=X^3-tZ^3$ and it is not isomorphic to the base change $\mathscr E_1\times_\mathbb C \mathbb C(t)$ of the fibre of $p$ \cite[Exercise 4.D]{Olsson}\footnote{But they become isomorphic after passing to the extension $\mathbb{C}(t^{\frac 16})$ of $\mathbb C(t)$.}, so $p$ isn't Zariski-locally trivial.\medskip

Although the Zariski topology is too coarse for our purposes, the above example suggests that we could obtain more satisfying results when working in finer Grothendieck topologies, in particular in the \'etale topology of the base of considered family. 
From Definition \ref{fibtrivial}, it follows that when $S$ is connected, an isotrivial family $f\colon\XX\to S$ is fibrewise trivial. In this paper, we investigate whether the converse statement holds under some reasonable assumptions about the family $f$. This topic appears to be well-known to experts. In \cite[Remark 1.4]{kovacs}, the author mentions briefly the fact that projective families over $\mathbb{C}$ with isomorphic fibers are \textit{generically} locally trivial with respect to the étale topology\footnote{Note, however, that the notion of isotriviality in op.cit. differs from the one being used throughout this paper: compare \cite[Definition 1.2]{kovacs} with Definition \ref{fibtrivial}.}. In \cite[Section 1]{BBG}, authors claim that an algebraic variant of Theorem \ref{fischergrauert} should follow from \cite[Corollary 2.6.10]{sernesi} which states that isotriviality is equivalent to formal triviality (\ref{sef:formal-triviality}) for flat projective families.

That being said, none of the sources listed above provides a precise statement of a variant of Theorem \ref{fischergrauert}. Nor is it immediately clear what assumptions about a fibrewise trivial family $f\colon\XX\to S$ are sufficient to deduce formal triviality of $f$ at every closed points of its base. To the author's knowledge, both an explicit statement and a proof of such a result appear to be missing from the literature. In what follows, we attempt to fill the gap by providing a precise statement of an algebraic variant of Theorem \ref{fischergrauert} (Theorem \ref{mainthm}) for fibrewise trivial families of projective $k$-varieties in the case when $k$ is a \emph{sufficiently large} algebraically closed field, and all varieties and morphisms are algebraic, together with a detailed proof of this assertion. Namely, our main result is the following theorem. 

\begin{thm}\label{mainthm}
	Let $k$ be an algebraically closed field of infinite transcendence degree over its prime field. Suppose that $f\colon\XX\to S$ is a smooth, projective, fibrewise trivial morphism of finite type $k$-schemes with fibre $X$. Suppose furthermore that $S$ is reduced, and that the automorphism group scheme (Proposition \ref{prop:isomrepresentable}) $\Aut^{X}_k$ of the fibre is reduced. Then $f$ is isotrivial.
\end{thm}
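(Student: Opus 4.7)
Following the roadmap of the introduction, I will reduce to the case where $S$ is a smooth integral curve, show that the Kodaira--Spencer map of $f$ then vanishes, and use this vanishing to conclude formal triviality of $f$ at every $s\in S(k)$; Sernesi's equivalence \cite[Corollary 2.6.10]{sernesi} between formal and \'etale-local triviality for projective flat families then yields isotriviality.

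\textbf{Reduction to a smooth curve.} After Zariski-localising near a fixed $s\in S(k)$ and using that the smooth locus of the reduced finite-type $k$-scheme $S$ is open and dense, I may assume $S$ is smooth and integral. To cut dimension down to one, I argue that formal triviality at $s$ on a smooth $S$ of dimension $d$ can be detected by checking formal triviality along sufficiently many smooth integral curves $C\subset S$ through $s$ whose tangent vectors span $T_s S$: writing $\hat{\OO}_{S,s}\simeq k[[t_1,\ldots,t_d]]$, the $n$th obstruction to extending an isomorphism on $\Spec \OO_{S,s}/\mathfrak{m}_s^{n+1}$ lies in $H^1(X,T_X)\otimes \mathrm{Sym}^n(T_s^* S)$ and restricts in each tangent direction to the corresponding obstruction on a curve $C$. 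The infinite transcendence degree hypothesis is essential here, supplying enough smooth $k$-rational curves through $s$ to span $T_s S$---since every finitely generated extension of the prime field embeds into $k$, every integral $k$-variety of finite type has a dense set of $k$-points.

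\textbf{Vanishing of Kodaira--Spencer on a curve.} With $S$ now a smooth integral curve, consider $P:=\Isomm_S(X\times_k S,\mathcal{X})$, representable by Proposition~\ref{prop:isomrepresentable}, on which $G_S:=\Aut^X_k\times_k S$ acts freely and transitively on fibres. Because $\Aut^X_k$ is of finite type and reduced over the algebraically closed $k$, it is smooth. Fibrewise triviality gives $P(k)\twoheadrightarrow S(k)$, so $P\to S$ is surjective. I claim $P\to S$ is smooth: its fibres are copies of $\Aut^X_k$, hence smooth; flatness follows from equidimensional smooth fibres over the regular one-dimensional base $S$; and the torsor identity $G\times_k P\simeq P\times_S P$ together with faithfully flat descent transfers smoothness of $G$ to $P\to S$. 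Pick $p\in P(k)$ above $s\in S(k)$. The standard deformation-theoretic sequence identifies the failure of surjectivity of $T_p P\to T_s S$ with $\KS_s\colon T_s S\to H^1(X,T_X)$, so surjectivity forces $\KS_s=0$. Running over $s\in S(k)$ yields $\KS\equiv 0$ on $S$.

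\textbf{Conclusion and main obstacle.} With $\KS\equiv 0$, I inductively extend a chosen isomorphism $X\simeq \mathcal{X}_s$ through every infinitesimal thickening of $s$: the primary obstruction at each step is controlled by $\KS$, while the smoothness of $\Aut^X_k$ allows adjusting each lift by an automorphism to kill higher-order obstructions. This assembles into a formal isomorphism $\mathcal{X}\times_S \Spec\hat{\OO}_{S,s}\simeq X\times \Spec\hat{\OO}_{S,s}$, and Sernesi's theorem upgrades this to isotriviality. The main obstacle is the vanishing of $\KS$: fibrewise triviality is an assertion about closed fibres only, and extracting infinitesimal information from it requires the smoothness of $\Aut^X_k$ in an essential way---without which the examples in positive characteristic mentioned in the abstract become possible. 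A technical sub-hurdle is establishing flatness of $P\to S$ so that smoothness descends through the torsor identity; this is the most delicate point of the argument.
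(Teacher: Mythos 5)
Your central step---smoothness of $P=\Isomm^{\XX,\,X\times_k S}_S\to S$ and hence vanishing of the Kodaira--Spencer map---is circular. Flatness of the Isom scheme over $S$ is \emph{equivalent} to isotriviality (Proposition~\ref{isoimpliesflat}), so it cannot be justified by ``equidimensional smooth fibres over a regular one-dimensional base'': that implication is false without, e.g., Cohen--Macaulayness of the total space, and more fundamentally without any control of the generic fibre. Fibrewise triviality only says that every \emph{closed} fibre of $P$ is isomorphic to $\Aut^X_k$, i.e.\ that the image of $P\to S$, which is merely a countable union of constructible sets ($P$ is only locally of finite type, see \ref{componentsofisom}), contains $S(k)$; a priori the generic fibre could be empty (over a countable algebraically closed field a countable union of finite sets of closed points can cover $S(k)$), in which case $P\to S$ is nowhere flat and $T_pP\to T_sS$ need not be surjective. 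The torsor identity $G\times_k P\simeq P\times_S P$ likewise only descends smoothness once you know $P\to S$ is fppf---again the statement to be proved. This is exactly where the hypothesis $\trdeg(k/\mathbb{F})=\infty$ is needed in the paper: Proposition~\ref{countablecover} forces one quasi-projective piece of the Isom scheme to dominate $S$, giving \emph{generic} isotriviality (Theorem~\ref{genericisotriviality}), and only then does $\KS_f$ vanish, first on a dense open and then everywhere because its image would be a torsion subsheaf of the locally free sheaf $\RR^1f_\ast\shT_{\XX/S}$. Your proposed use of the transcendence-degree hypothesis (producing rational curves through $s$ spanning $T_sS$) requires no such hypothesis and misses where the difficulty actually sits.

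Two further gaps. First, the reduction to curves: you cannot discard the singular locus of $S$, since isotriviality must be proved at \emph{every} $s\in S(k)$, including singular points; and the claim that formal triviality along curves through $s$ whose tangents span $T_sS$ implies formal triviality at $s$ is unsubstantiated---the order-$n$ obstruction depends on the lifts chosen at lower orders and does not decompose direction-by-direction as you assert. The paper instead reduces via the equivalence isotriviality $\Leftrightarrow$ flatness of the Isom scheme together with Proposition~\ref{flatbccurves} (flatness over a reduced base can be tested on smooth curves mapping to $S$), proved by normalization, blow-ups and the unibranch criterion \cite[11.6.1]{EGA43}. Second, your concluding lifting argument is only a sketch and, as written, works at best in characteristic $0$: in the paper's inductive step the obstruction cocycle appears with a factor $n$ (the class $n\tau^n\phi_i^\ast\dd_{ij}$), so one must divide by $n$, and when $\chr k=p>0$ a genuinely different mechanism is required, namely \'etale-local Frobenius descent \`a la Ogus via Popescu approximation (Proposition~\ref{ogus-refinement}); ``adjusting each lift by an automorphism to kill higher-order obstructions'' does not address this.
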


Note that the assumption about $\Aut^X_k$ being reduced is automatic when $\chr k=0$ \cite[Theorem 3.23]{milne}.\medskip

Let us present a brief outline of the proof of Theorem \ref{mainthm} when $\chr k=0$. First, we prove generic isotriviality (Theorem \ref{genericisotriviality}), a minor generalization of the result mentioned in \cite{kovacs}, where the author considers only schemes over complex numbers. This is the only step in the proof where we make use of the condition $\trdeg (k/\mathbb{F})=\infty$ for $\mathbb F$ being the prime field of $k$. We use this result to show that the relative Kodaira--Spencer map $\KS_f:\mathscr{T}_{S/k}\rightarrow\RR^1f_\ast\shT_{\XX/S}$ associated to $f$ vanishes on $S$ (Proposition \ref{kodairavanishes}). Next, we reduce Theorem \ref{mainthm} to the situation when $S$ is a smooth integral curve of finite type over $k$, using a variant of the valuative criterion of flatness (Proposition \ref{flatbccurves}). Finally (Section \ref{mainresult1}), we prove formal triviality at every closed point of the base with an explicit calculation, using the vanishing of $\KS_f$.
\medskip

The strategy behind the proof of Theorem \ref{mainthm} in positive characteristic (Section \ref{mainresult2}) is the same except for a few caveats --- the final calculation done in the proof of the $\chr k=0$ case relies heavily on this assumption. Hence, assuming $\chr k=p>0$, we prove formal triviality using the fact that now our schemes are equipped with Frobenius morphism. We apply a variant of Ogus' lemma (Proposition \ref{ogus-refinement}), which originally states that the vanishing of the Kodaira--Spencer map guarantees that a smooth proper family $\XX\to S=\Spec k[[t]]$ descends along Frobenius $F_S:S\to S$ (Lemma \ref{frobeniusdescend}). In our setting, when $S$ is a smooth curve over $k$, we show that the descent can be done in an appropriate \'etale neighborhood of any given point $s\in S(k)$. We deduce this fact from the original result of Ogus by approximating the scheme $\Spec \widehat\OO_{S,s}$ with smooth $S$-schemes \cite[Tag \href{https://stacks.math.columbia.edu/tag/07GC}{\texttt{07GC}}]{stacks}.\medskip

Lastly, we show how to construct a fibrewise trivial family with nowhere vanishing Kodaira--Spencer map (Section \ref{KSnotzero}), thereby proving that $\XX/S$ may not even be generically isotrivial in the case when $\Aut^X_k$ is non-reduced.\medskip

We think that an interesting question that we didn't find an answer to is whether we can remove the assumptions about the scheme $\Aut^X_k$ by working with fppf topology.
\begin{example}
	Let $k$ be an algebraically closed field of infinite transcendence degree over its prime field. Suppose that $f\colon\XX\rightarrow S$ is a smooth, projective, fibrewise trivial morphism of finite type $k$-schemes with $S$ reduced. Does it imply that $f$ is fppf-locally trivial?
\end{example}
\section*{Acknowledgements}
I am indebted to Piotr Achinger for suggesting me the topic of this thesis, for his infinite patience when supervising the project, and for many fruitful discussions, which taught me a lot of mathematics.\:I would like to thank Adrian Langer for many helpful suggestions. Many thanks to Joachim Jelisiejew for help with the proof of Lemma~\ref{curvesareenough}. A part of this work concerning the characteristic~$0$ case was the author's Bachelor Thesis at Warsaw University. The author was supported by NCN Sonata grant number 2017/26/D/ST 1/00913.
\section{Preliminaries}
\subsection{Notation} Throughout, $k$ always denotes an algebraically closed field. Given a scheme $S$, we denote the category of $S$-schemes by $\Sch_S$. When $S=\Spec A$ is affine, we write $\Sch_A$ for $\Sch_{\Spec A}$. We often write $\XX/S$ for an $S$-scheme $\XX\to S$, provided that the structure morphism is clear from the context. We given $\XX/S$ and $\phi:T\to S$, we write $\phi^\ast\X$ for a choice of pullback of $\XX$ along $\phi$. When the structure morphism  $T\to S$ is clear, we also denote $\phi^\ast \XX$ with $\XX_T$.\medskip

Given a scheme $S$, we denote with $\Coh(S)$ the category of coherent sheaves on $S$. When $\mathscr F\in \Coh(S)$ and $s\in S$ is a point, we denote with $\mathscr F_s$ the stalk of $\mathscr F$ at $s$ and by $\mathscr F(s)$ the fibre of $\mathscr F$ at $s$, i.e. $\mathscr F(s)=\mathscr F_s/\mathfrak m_s\mathscr F_s$, where $\mathfrak m_s\subset \OO_{S,s}$ is the maximal ideal.\medskip

A few times, e.g. in the proof of generic isotriviality (Theorem \ref{genericisotriviality}) and of \'etale local Frobenius descent (Proposition \ref{ogus-refinement}) for families with vanishing Kodaira--Spencer map, we will make use of the fact that a finite type morphism $f\colon \XX\to\Spec A$ can be described with finitely many equations only, hence can be defined over a finitely generated $\Z$-algebra $B\subset A$, see \cite[II.5.10]{Kollar}.
\begin{defn}\label{models}
Suppose $S=\Spec A$ is affine, let $B\subset A$ be a subring, and let $\XX\in \Sch_S$. We say that $\XX$ \textbf{is defined over $B$} if there exists a \textbf{model of $\XX$ over $B$}, i.e. $\XX^{(B)}\in \Sch_B$ of finite type such that $\X$ is isomorphic to $\X^{(B)}\times_{\Spec B} S$ as $S$-schemes.
\end{defn}
\subsection{$\Hom$, $\Isomm$ and $\Aut$ schemes} Let $S$ be a $k$-scheme and $\X$, $\Y$ be two $S$-schemes. We define the functors $\Homm^{\X,\Y}_S$ of morphisms and $\Isom^{\X,\Y}_S$ of isomorphisms between $\X$ and $\Y$. Using the theory of Hilbert scheme \cite{gro4}, one can show that these functors are representable by a locally finite type $S$-schemes $\Hom^{\X,\Y}_S$ and $\Isomm^{\X,\Y}_S$ respectively. The existence of $\Isomm^{\X,\Y}_S$ will be the main tool in our proof of Theorem \ref{mainthm} and we will use the existence of $\Hom^{\X,\Y}_S$ in Section \ref{KSnotzero}.
\begin{defn}\label{def:isomandaut}Suppose that $\X$ and $\Y$ are $S$-schemes. 
We define the functor of morphisms $\Homm^{\X,\Y}_S:\Sch_S\rightarrow \Sets$ as follows: given $T\in \Sch_S$ we set\begin{displaymath}
	\Homm^{\X,\Y}_S(T)=\left\{\phi:\X_T\longrightarrow \Y_T\:\: \bigg\rvert\:\:\substack{\text{\normalsize $\phi$ is an morphism} \\ \text{\normalsize of $T$-schemes}}\right\}.\end{displaymath}
We also define the functor of isomorphisms $\Isom^{\X,\Y}_S:\Sch_S\rightarrow \Sets$ as follows: given $T\in \Sch_S$ we set\begin{displaymath}
	\Isom^{\X,\Y}_S(T)=\left\{\phi:\X_T\longrightarrow \Y_T\:\: \bigg\rvert\:\:\substack{\text{\normalsize $\phi$ is an isomorphism} \\ \text{\normalsize of $T$-schemes}}\right\}.
\end{displaymath}
Also, we define $\Autt^{\X}_S:=\Isom^{\X,\X}_S$.
\end{defn}

Let $\X\in \Sch_S$. Then we define the  Hilbert functor $\Hilbb^{\XX}_S:\Sch_S\to \Sets$ associating to $S'\in \Sch_S$ the set of $S'$-flat closed subschemes of $\XX\times_S S'$ \cite[p. 265]{gro4}. When $\XX/S$ is \emph{projective}, then $\Hilbb^{\X}_S$ is representable by an $S$-scheme, denoted $\Hilb^{\XX}_S$ (op.cit.).

\begin{prop}[{\cite[4.c]{gro4}}]\label{prop:isomrepresentable}
	Suppose $\X/S$ and $\Y/S$ are projective. Then $\Homm^{\X,\Y}_S$ and $\Isom^{\X,\Y}_S$ are representable by schemes, locally of finite type over $S$. We denote these scheme with $\Hom^{\X,\Y}_S$ and $\Isomm^{\X,\Y}_S$ respectively. What's more, the obvious functor $\Isom^{\X,\Y}_S\to \Homm^{\X,\Y}_S$ induces an open embedding  $\Isomm^{\X,\Y}_S\to \Hom^{\X,\Y}_S$ of $S$-schemes and similarly the functor $\Homm^{\X,\Y}_S\to \Hilbb^{\X\times_S \Y}_S$ associating morphisms to their graphs \cite[4.6.6]{sernesi} induces an open embedding $\Hom^{\X,\Y}_S\to \Hilb^{\X\times_S \Y}_S$ of $S$-schemes.
\end{prop}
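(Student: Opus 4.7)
The plan is to realize $\Hom^{\X,\Y}_S$ and $\Isomm^{\X,\Y}_S$ as open subschemes of the Hilbert scheme of the product $\X \times_S \Y \to S$. Since $\X$ and $\Y$ are projective over $S$, so is $\X \times_S \Y$, and Grothendieck's theorem on the existence of the Hilbert scheme provides an $S$-scheme $\Hilb^{\X \times_S \Y}_S$, locally of finite type over $S$, representing $\Hilbb^{\X \times_S \Y}_S$. The graph construction furnishes a natural transformation $\Homm^{\X,\Y}_S \to \Hilbb^{\X \times_S \Y}_S$ sending a $T$-morphism $\phi \colon \X_T \to \Y_T$ to its graph $\Gamma_\phi \subset (\X \times_S \Y)_T$; since the first projection restricts to an isomorphism $\Gamma_\phi \to \X_T$, the subscheme $\Gamma_\phi$ is $T$-flat as soon as $\X/S$ is flat (as is tacitly the case in the applications to smooth families later in the paper).

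The next step is to identify $\Homm^{\X,\Y}_S$ with an open subfunctor of $\Hilbb^{\X \times_S \Y}_S$. Let $\mathcal{Z} \subset \X \times_S \Y \times_S \Hilb^{\X \times_S \Y}_S$ denote the universal flat closed subscheme and consider the projection $p \colon \mathcal{Z} \to \X \times_S \Hilb^{\X \times_S \Y}_S$. Both source and target are proper and flat over $\Hilb^{\X \times_S \Y}_S$, so the locus $U$ on which the restriction of $p$ to the geometric fibres is an isomorphism is open; a $T$-point of $\Hilb^{\X \times_S \Y}_S$ factors through $U$ if and only if its associated subscheme is the graph of a (necessarily unique) morphism $\X_T \to \Y_T$. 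Thus $U$ represents $\Homm^{\X,\Y}_S$ and the inclusion $\Hom^{\X,\Y}_S \hookrightarrow \Hilb^{\X \times_S \Y}_S$ is an open immersion. Applying an entirely analogous argument to the universal morphism $\Phi \colon \X_{\Hom^{\X,\Y}_S} \to \Y_{\Hom^{\X,\Y}_S}$, one realizes $\Isomm^{\X,\Y}_S$ as the open subscheme of $\Hom^{\X,\Y}_S$ over which $\Phi$ itself is an isomorphism; equivalently, one selects the open locus in $\Hilb^{\X \times_S \Y}_S$ where both projections from $\mathcal{Z}$ are isomorphisms.

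The main obstacle I anticipate is the openness of the ``isomorphism locus'' for a morphism $h \colon A \to B$ of proper flat $S$-schemes of finite presentation over a Noetherian base $S$: the set of $s\in S$ with $h_s$ an isomorphism is open. I would reduce, by base change, to the fibrewise statement, and then combine openness of the surjective locus for proper maps, openness of the flat locus, the cohomology-and-base-change formalism to promote a fibrewise isomorphism to an isomorphism in a neighbourhood, and the standard fact that a proper finite flat morphism of degree one is an isomorphism. These ingredients are all classical (EGA IV), but combining them with the universal family to produce the desired open subfunctor is the genuine geometric content of the proposition; the Hom-part of the argument is carried out in detail in \cite[4.6.6]{sernesi}.
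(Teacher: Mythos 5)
Your argument is correct and is essentially the proof the paper relies on by citation (Grothendieck's FGA, 4.c, as detailed e.g.\ in Sernesi 4.6.6): embed $\Homm^{\X,\Y}_S$ into $\Hilbb^{\X\times_S\Y}_S$ via graphs and cut out the open loci of $\Hilb^{\X\times_S\Y}_S$ where the relevant projections from the universal subscheme (resp.\ the universal morphism) are isomorphisms, using openness of the isomorphism locus for proper flat families. You are also right to flag that flatness of $\X/S$ is needed for the graph to define a point of the Hilbert functor; this hypothesis is implicit in the proposition as stated and is automatic in the paper's applications, where $\X/S$ is smooth.
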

\begin{cor}
	Suppose that $\X/S$ is projective. Then the functor $\Autt^{\X}_S$ is representable by a group $S$-scheme, denoted $\Aut^{\X}_S$.
\end{cor}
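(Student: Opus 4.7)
The plan is very short: by Proposition~\ref{prop:isomrepresentable}, applied to the case $\Y=\X$, the functor $\Isom^{\X,\X}_S$ is representable by an $S$-scheme $\Isomm^{\X,\X}_S$, locally of finite type over $S$. Since $\Autt^{\X}_S$ is \emph{by definition} equal to $\Isom^{\X,\X}_S$ (Definition~\ref{def:isomandaut}), representability by a scheme locally of finite type over $S$ is already settled. Thus the only thing left to produce is the group $S$-scheme structure.

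For this, I would observe that for every $T\in\Sch_S$, the set $\Autt^{\X}_S(T)=\Isom^{\X,\X}_S(T)$ is a group under composition of $T$-automorphisms of $\X_T$, with inverse given by the inverse isomorphism and identity given by $\mathrm{id}_{\X_T}$. These operations are manifestly natural in $T$, so they give natural transformations of functors $\Sch_S\to \Sets$:
\begin{equation*}
m\colon \Autt^{\X}_S\times_S \Autt^{\X}_S\longrightarrow \Autt^{\X}_S,\qquad
i\colon \Autt^{\X}_S\longrightarrow \Autt^{\X}_S,\qquad
e\colon h_S\longrightarrow \Autt^{\X}_S,
\end{equation*}
where $h_S$ is the functor represented by $S$. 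By Yoneda, these natural transformations come from unique morphisms of $S$-schemes between the corresponding representing objects, and the group scheme axioms (associativity, unit, inverse) correspond under Yoneda to equalities of natural transformations which hold simply because they hold set-theoretically on every $T$-valued point.

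There is no real obstacle here: the whole content of the corollary is already in Proposition~\ref{prop:isomrepresentable}, and the rest is the standard mechanism by which a group-valued representable functor acquires a compatible group scheme structure via Yoneda's lemma. The only minor point to mention is that the fibre product $\Autt^{\X}_S\times_S \Autt^{\X}_S$ is again representable, namely by $\Isomm^{\X,\X}_S\times_S \Isomm^{\X,\X}_S$, so that $m$ indeed corresponds to a morphism of $S$-schemes.
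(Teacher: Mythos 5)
Your argument is correct and is exactly the route the paper intends: the corollary is immediate from Proposition~\ref{prop:isomrepresentable} applied with $\Y=\X$, since $\Autt^{\X}_S=\Isom^{\X,\X}_S$ by Definition~\ref{def:isomandaut}, and the group scheme structure is the standard Yoneda transfer of the composition group structure on $T$-points, which the paper leaves implicit.
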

Now suppose $\XX\rightarrow S$ is projective and fix a relatively very ample line bundle $\OO_{\XX}(1)$ on $\XX$. Let $\mathscr{F}\in\Coh(\XX)$. Given a point $s\in S$, write $\mathscr{F}_s$ for $\mathscr{F}_{\XX_s}$. Then, $\OO_{\XX}(1)$ restricts to a very ample line bundle $\OO_{\XX_s}(1)$ on the projective $k(s)$-scheme $\XX_s$. Thus, for every point $s\in S$, we obtain a Hilbert polynomial $p_{\mathscr{F}_{s}}(m)$. In case $\mathscr{F}=\OO_{\mathcal Z}$ for some subscheme $\mathcal{Z}\subset \XX$, we denote $p_{\OO_{\mathcal Z_s}}$ with $p_{\mathcal Z_s}$.\medskip

Consider the function $s\mapsto p_{\mathscr{F}_s}(m)$ for $s\in S(k)$. In case $\mathscr{F}$ is $S$-flat, this function is locally constant on $S$. Given $P\in \Q[y]$, we define the subfunctor $\Hilbb^{\XX,P}_S$ of $\Hilbb^{\XX}_S$ as follows
	\begin{displaymath}
		\Hilbb_{S}^{\X,P}(T)=\left\{\mathcal{Z}\subset \XX\times_S T\:\bigg\rvert\: \substack{\text{\normalsize $\mathcal{Z}$ is a $T$-flat closed subscheme of $\XX\times_S T$,}\\[0.5 em] \text{\normalsize and  $p_{\mathcal{Z}_t}=P$ for all $t\in T$}}\right\}.
	\end{displaymath}
	Then \begin{equation}\label{eq:hilbdisjointunion}
	\Hilbb^{\XX}_S = \bigsqcup_{P\in\Q[y]}\Hilbb^{\XX,P}_S
\end{equation}
and when $\XX/S$ is projective, each $\Hilbb^{\XX,P}_S$ is representable by a \emph{projective} $S$-scheme.
\subsection{}\label{componentsofisom} As a result, when $\X/S$ and $\Y/S$ are projective, by Proposition \ref{prop:isomrepresentable} and \eqref{eq:hilbdisjointunion}, we find that $\Isomm^{\X,\Y}_S=\bigsqcup_{n\in \N}I_n$ is a countable disjoint union with each $I_n$ quasi-projective over $S$. In particular, for $p:\Isomm^{\X,\Y}_S\to S$ denoting the structure morphism, we deduce from the Chevalley Theorem that each $p(I_n)$ is constructible in $S$.
\subsection{Exponential law}\label{exponentiallaw}
Let $\Homm^{\X,\Y}_S$ be the functor as in Definition \ref{def:isomandaut}. Then, treating schemes as functors of points, we find that we have an isomorphism
\begin{equation}\label{eq:explaw1}
	\Hom_{S}(T,\Homm^{\XX,\Y}_S)\xrightarrow{\sim} \Hom_{S}(T\times_S\XX, \Y)=\Hom_{T}(T\times_S\XX, T\times_S \Y)
\end{equation}
which is natural in $T\in \Sch_S$. In \eqref{eq:explaw1}, $\Hom_S$ denotes morphisms in the category of sheaves on $\Sch_S$ with fppf topology. When $\X$ and $\Y$ are projective over $S$, then $\Homm^{\X,\Y}$ is representable by an $S$-scheme so \eqref{eq:explaw1} induces a bijection
\begin{equation}\label{eq:explaw}
	\Hom_{S}(T,\Hom^{\XX,\Y}_S)\xrightarrow{\sim} \Hom_{S}(T\times_S\XX, \Y)
\end{equation}
between sets of morphisms of $S$-schemes. We call it the \textbf{exponential law}.

\subsection{Deformation theory} Let $k$ be an algebraically closed field. Denote $\Art_k$ the full subcategory of the category of $k$-algebras, consisting of local Artinian $k$-algebras with residue field equal to $k$.
\begin{defn}[{see \cite[1.2.1]{sernesi}}]
	Let $X\in \Sch_k$. A \textbf{deformation of $X$ over $A\in \Art_k$} is a Cartesian diagram
	\begin{equation*}
		\begin{tikzcd}
			X\ar{d}\ar{r}{j} & \XX\ar{d}{\pi}\\[1.5 em] \Spec k\ar{r} & \Spec A
		\end{tikzcd}
	\end{equation*}
	with $\pi$ flat.	
\end{defn}
We denote the set of isomorphism classes of deformations of $X$ over $A$ by $\Def_X(A)$. The association $A\mapsto \Def_X(A)$ defines a functor $\Def_X\colon\Art_k\to \Sets$. In particular, notice that here, a deformation is the data of an $A$-scheme $\XX$ \emph{and} an isomorphism $X\xrightarrow\sim\XX\times_A k$.
\begin{defn}{\cite[p. 44]{sernesi}}\label{pro-rep-definition}
	Suppose $X\in \Sch_k$. We say that $\Def_X$ is \textbf{pro-representable} if $\Def_X(\cdot)=\Hom_{k{\rm- alg}}(R,\cdot)$ for some complete local noetherian $k$-algebra $R$ with residue field $k$. We say $X\in \Sch_k$ \textbf{has pro-representable deformations} when $\Def_X$ is pro-representable.
\end{defn}
The common tool for determining whether $X$ has pro-representable deformations is \textit{Schlessinger's criterion} \cite[Theorem 2.11]{schless}. Standard examples of schemes with pro-representable deformations include smooth projective connected curves over $k$ \cite[Corollary 2.6.10]{sernesi} and abelian varieties \cite[Theorem 2.2.1]{oort}.

\section{Generic isotriviality}
The first step in our proof of Theorem \ref{mainthm} is showing that our claim holds generically on (i.e., over an open dense subset of)  $S$. The proof of this part relies on the following well-known lemma from the theory of constructible sets.

\begin{prop}\label{countablecover}
	Let $k$ be an algebraically closed field of infinite transcendence degree over its prime field $\mathbb{F}$, and let $S$ be an irreducible $k$-scheme of finite type. Suppose that $\mathcal{C}=\{C_i\}_{i\in \N}$ is a countable family of constructible subsets of $S$, satisfying $S(k)\subset \bigcup_{i\in \N}C_i(k)$. Additionally, assume that there exists an intermediate extension $\mathbb{F} \subset \ell\subset k$ with $\trdeg (\ell/ \mathbb F)<\infty$ such that $S$ admits a model $S^{\ell}$ over $\ell$ and all $C_i\in \mathcal{C}$ admit models $C_i^\ell$ which are constructible subsets of $S^\ell$.
	Then there exists $C\in\mathcal{C}$ dense in $S$.
\end{prop}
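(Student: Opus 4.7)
The plan is to argue by contradiction: assuming no $C_i\in\mathcal{C}$ is dense in $S$, I produce a $k$-point of $S$ that lies outside every $C_i$, contradicting $S(k)\subset \bigcup_i C_i(k)$. Write $\pi\colon S = S^\ell\times_\ell k \to S^\ell$ for the projection, let $\eta_S\in S$ be the generic point, and set $\eta^\ell := \pi(\eta_S)\in S^\ell$.

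The first step is a topological reformulation. Since $S$ is irreducible and each $C_i$ is constructible, $C_i$ is dense in $S$ if and only if $\eta_S\in C_i$. Because the relation between $C_i$ and its model $C_i^\ell$ translates to $C_i = \pi^{-1}(C_i^\ell)$ on underlying topological spaces, this is equivalent to $\eta^\ell\in C_i^\ell$. Hence the assumption that no $C_i$ is dense becomes $\eta^\ell\notin C_i^\ell$ for every $i\in\mathbb{N}$.

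The second step is to construct a $k$-point $x\in S(k)$ with $\pi(x)=\eta^\ell$. Since $S^\ell$ is of finite type over $\ell$, the residue field $\kappa(\eta^\ell)$ is finitely generated over $\ell$, so $\trdeg(\kappa(\eta^\ell)/\mathbb{F})<\infty$. On the other hand, $\trdeg(k/\mathbb{F})=\infty$ combined with $\trdeg(\ell/\mathbb{F})<\infty$ gives $\trdeg(k/\ell)=\infty$, so one can embed a transcendence basis of $\kappa(\eta^\ell)$ over $\ell$ into $k$; since $k$ is algebraically closed, this partial embedding extends to an $\ell$-algebra embedding $\kappa(\eta^\ell)\hookrightarrow k$. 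The resulting morphism $\Spec k\to S^\ell$ over $\ell$ corresponds, by the universal property of fibre products, to a $k$-point $x\colon\Spec k\to S$ with $\pi(x)=\eta^\ell$.

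Combining the two steps: by hypothesis $x\in C_i(k)$ for some $i$, whence $\eta^\ell=\pi(x)\in C_i^\ell$, contradicting the reformulation above. The only place the hypothesis $\trdeg(k/\mathbb{F})=\infty$ enters is in the transcendence-basis embedding $\kappa(\eta^\ell)\hookrightarrow k$; this is the heart of the argument and the sole use of the ``sufficiently large'' assumption on $k$. The constructibility assumption is used only to reduce density to the statement ``contains the generic point''.
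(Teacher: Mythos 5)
Your argument is correct, and it takes a genuinely different route from the paper's. The paper proves the proposition by induction on dimension: it first treats $\A^1_k$ (using precisely your idea in dimension one: a point $(T-\lambda)$ with $\lambda$ transcendental over $\ell$ maps to the generic point of $\A^1_\ell$, so any $C_i^\ell$ containing its image is dense), then handles $\A^d_k$ by slicing with a hyperplane not defined over $\ell$ and invoking the inductive hypothesis, and finally reduces arbitrary irreducible $S$ to $\A^d_k$ via Noether normalization and Chevalley's theorem. You instead globalize the one-dimensional trick in a single stroke: since $\kappa(\eta^\ell)$ is finitely generated over $\ell$ and $\trdeg(k/\ell)=\infty$, you embed $\kappa(\eta^\ell)$ into $k$ over $\ell$ and obtain a point $x\in S(k)$ whose image under $\pi$ is $\eta^\ell=\pi(\eta_S)$; since $C_i=\pi^{-1}(C_i^\ell)$ topologically, membership of $x$ in some $C_i$ forces $\eta^\ell\in C_i^\ell$, hence $\eta_S\in C_i$, hence density. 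This is shorter, avoids the normalization and hyperplane bookkeeping, and makes transparent that the only role of the largeness hypothesis is to provide $\trdeg(k/\ell)\geq\trdeg(\kappa(\eta^\ell)/\ell)$, i.e.\ finitely many elements of $k$ algebraically independent over $\ell$; in fact your argument uses only the easy implication ``contains $\eta_S$ $\Rightarrow$ dense,'' so constructibility of the $C_i$ plays almost no role beyond the formulation of the model hypothesis. What the paper's longer induction buys is an argument phrased entirely in terms of closed points and hyperplanes, closer in spirit to a classical ``very general point'' count, but your residue-field embedding is the cleaner and more conceptual proof of the same statement.
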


\begin{proof}
Let $\eta\in S$ be the generic point of $S$. Every constructible set is a union of finitely many constructible irreducible sets. Therefore, by replacing each $C_i$ with the collection of its irreducible components, we may assume that $\mathcal{C}$ is a countable family of constructible irreducible subsets of $S$. If $\dim S=0$ then $|S|$ is a single point so there is nothing to prove. Hence we may suppose $\dim S\geq 1$. We prove the assertion in three steps.\medskip

\textbf{Step 1:} Suppose that $S=\A_k^1=\Spec k[T]$.
Replacing $\ell$ with $\overline{\ell}$, we may assume that $\ell$ is algebraically closed (preserving the condition $\trdeg (\ell/\mathbb F)<\infty$). By assumption there exist constructible subsets $C_{i}^\ell\subset \A^1_{l}$ such that $C_i = C_i^\ell\otimes_{\ell}k$. If none of $C_i$ is dense in $S$, then each $C_i$ is a finite union of closed points, thus so is each $C_i^\ell$. For $\lambda\in k\setminus \ell$, let $\mathfrak{m}=(T-\lambda)$ and suppose $p=[\mathfrak{m}]\in \A^1_k$ belongs to some $C_i$. Then the image of $p$ in $\A^1_\ell$ is the generic point of $\A^1_\ell$, hence $C^\ell_i$ is dense, a contradiction.\medskip

\textbf{Step 2:} $S=\A^d_k$. We proceed by induction on $d$. The base of induction $d=1$ is done in \textbf{Step~1}. 
Let $d>1$ and suppose $\A^{r}_k$ satisfies the assertion of the proposition for all $1\leq r<d$. Suppose that none of the $C_i$'s contain $\eta$. Then $\dim C_i\leq d-1$ for all $i\in\N$. Since every $C_i$ is assumed to be irreducible, there exists at most one hyperplane $H_i\subset S$ contained in $\overline{C_i}$.
Let $H\subset \A^d_{k}$ be a hyperplane different from $H_i$'s (such an $H$ exists because each $H_i$ is defined over $\ell$ hence it suffices to choose $H$ which is not defined over $\ell$) with generic point $\xi$. By our assumptions, $\overline{C_i}\cap H\subsetneq H$ is a proper closed subset of $H$, so $\xi\notin C_i\cap H$. The family $\mathcal{C}|_{H}:= \{C_i\cap H\}_{i\in\N}$ consists of countably many constructible subsets of $H$, with none of $C_i\cap H$ dense in $H$. By the inductive assumption, $H(k)\not\subset \bigcup_{i\in\N}( C_i\cap H)$. Then, for every $p\in H(k)\setminus\left(\bigcup_{i\in\N}( C_i\cap H)\right)$ we have $p\in S(k)\setminus \left(\bigcup_{i\in\N} C_i\right)$, a contradiction.
\medskip

	 \textbf{Step 3:} Let $S$ be an arbitrary irreducible $k$-scheme of dimension $d\geq 1$. We may assume that $S$ is affine. By Noether's Normalization Lemma \cite[Tag \href{https://stacks.math.columbia.edu/tag/0CBI}{\texttt{0CBI}}]{stacks}
	  there exists a finite surjective map $\pi :S\rightarrow \mathbb{A}^d_k$. For the generic point $\mu\in\A^d_k$, we have $\eta=\pi^{-1}(\mu)$. Write $Z_i=\pi(C_i)$. Each $Z_i$ is irreducible and by Chevalley's Theorem it is also constructible. Surjectivity of $\pi$ implies $\A_k^d(k)\subseteq \bigcup_{i\in\N} Z_i$. By \textbf{Step 2}, there exists $i_0\in \N$ such that $Z_{i_0}$ is dense in $\A^d_k$, meaning that $\mu\in Z_{i_0}$. Then $\eta\in C_{i_0}$, proving that $C_{i_0}$ is dense in $S$.
\end{proof}

We apply this result to the scheme of isomorphisms $\Isomm^{\XX,X\times S}_S$ to show that one of its components dominates $S$.

\begin{thm}\label{genericisotriviality}
Suppose that $k$ is algebraically closed, of infinite transcendence degree over its prime field $\mathbb{F}$. Let $f:\XX\rightarrow S$ be a smooth, projective, fibrewise trivial morphism of finite type $k$-schemes with $S$ reduced and with fibre $X$. Then there exists an open dense subset $U$ of $S$ such that $f_U:\XX_U\to U$ is fppf-locally trivial. If we assume furthermore that the group scheme $\Aut_k^X:=\Isomm^{X,X}_k$ is reduced, then $\XX_U/U$ is isotrivial.
\end{thm}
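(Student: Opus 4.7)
The plan is to combine Proposition \ref{countablecover}, applied to the constructible images in $S$ of the components of the relative isomorphism scheme, with generic flatness and the exponential law. I may assume $S$ is irreducible (handle each component separately). By Proposition \ref{prop:isomrepresentable} the functor $\Isom^{\XX,X\times S}_S$ is representable by an $S$-scheme $I$, locally of finite type, and by \ref{componentsofisom} I have a decomposition $I = \bigsqcup_{n\in\N} I_n$ into quasi-projective pieces whose images $C_n := p(I_n) \subset S$ under the structure map $p\colon I \to S$ are constructible.

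Since $f$ and $X$ involve only finitely many equations, they descend (together with a chosen relatively very ample bundle used to index Hilbert polynomials) to an algebraically closed subfield $\ell \subset k$ of finite transcendence degree over $\mathbb{F}$. Formation of $\Isom$ and of the Hilbert scheme commutes with base change, so $I$ and each $I_n$, and thus each $C_n$, descend to $\ell$, verifying the hypothesis of Proposition \ref{countablecover}. Fibrewise triviality says that $\Isomm^{\XX_s,X}_k \neq \emptyset$ for every $s \in S(k)$, giving $S(k) \subset \bigcup_n C_n(k)$. Proposition \ref{countablecover} then supplies some $n_0$ with $C_{n_0}$ dense in $S$; being constructible and dense in an irreducible scheme, $C_{n_0}$ contains a dense open $U \subset S$.

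Over $J := I_{n_0}\times_S U$, the tautological inclusion $I_{n_0} \hookrightarrow I$ corresponds via the exponential law (\ref{exponentiallaw}) to an isomorphism $\XX_J \simeq X \times_k J$. The map $q\colon J \to U$ is surjective (as $C_{n_0}\supset U$) and of finite type; since $U$ is reduced, generic flatness lets me shrink $U$ so that $q$ is flat. Hence $q$ is an fppf cover over which $f_U$ is trivialized, proving fppf-local triviality. For the second claim, assume $\Aut^X_k$ is reduced. Being a reduced finite-type group scheme over an algebraically closed field, it is smooth over $k$. For each $u \in U(k)$ the fibre $J_u$ is open in the $\Aut^X_k$-torsor $\Isomm^{\XX_u,X}_k$, hence smooth over $k$. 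Combined with flatness and finite type, this makes $q$ smooth, so it admits étale-local sections; pulling back along such a section through the exponential law gives a trivialization $\XX_V \simeq X\times V$ over an étale neighborhood $V\to U$ of any prescribed point, establishing isotriviality of $f_U$.

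The main obstacle is checking that the decomposition $I = \bigsqcup I_n$, and therefore the countable family of constructible sets $\{C_n\}$, descends to a subfield of finite transcendence degree; this is exactly what makes Proposition \ref{countablecover} applicable and is the sole place where the hypothesis $\trdeg(k/\mathbb{F})=\infty$ is used. Once this descent is in place, the remainder of the argument — generic flatness, smoothness of $q$ under the reduced-automorphism hypothesis, and étale-local sections of smooth surjections — is essentially formal.
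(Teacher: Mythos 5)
Your proposal is correct and follows essentially the same route as the paper: decompose $\Isomm^{\XX,X\times S}_S$ into countably many quasi-projective pieces, descend everything to an algebraically closed subfield of finite transcendence degree so that Proposition \ref{countablecover} applies to the constructible images, use generic flatness over the reduced base to get an fppf trivializing cover from the tautological point, and, when $\Aut^X_k$ is reduced, upgrade flatness to smoothness of the structure map and take \'etale-local sections. The only cosmetic differences (reducing to irreducible components, phrasing the fibres as opens in a torsor) do not change the argument.
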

\begin{proof}
It is enough that we restrict ourselves to the case when $S$ is affine. Let $\mathbb{F}\subset \ell\subset k$ be an algebraically closed subfield of $k$ satisfying $\trdeg(\ell/\mathbb{F})<\infty$, such that there exist models (Definition \ref{models}) $S^{\ell}$, $\XX^{\ell}$, $X^{\ell}$ and $f^{\ell}:\XX^{\ell}\rightarrow S^{\ell}$ over $\ell$ of $S$, $\X$, $X$ and $f$ respectively (see \cite[II.5.10]{Kollar}), with the property that $f^\ell$ is fibrewise trivial with fiber $X^\ell$. Since $\trdeg(\ell/\mathbb{F})<\infty$, we find that $\trdeg(k/\ell)$ is infinite.\medskip

By Proposition \ref{prop:isomrepresentable}, $\mathcal{I}=\Isomm^{\XX, X\times_k S}_S$ and $\I^\ell = \Isomm^{\XX^\ell, X^\ell\times_k S^\ell}_{S^{\ell}}$ are schemes locally of finite type over $S$ and $S^\ell$ respectively. Moreover, we have $\mathcal{I}^\ell\otimes_\ell k\simeq \I$ and the structural morphisms $p:\mathcal{I}\rightarrow S$ and $p^\ell:\mathcal{I}^\ell\rightarrow S^\ell$ satisfy $p^\ell\times_{S^\ell}S = p$. By \ref{componentsofisom}, $\mathcal{I}^\ell=\bigsqcup_{i\in\N} \mathcal{I}^\ell_i$ with each $\mathcal{I}^\ell_i$ locally of finite type over $S^\ell$. Since $f$ is fibrewise trivial, $\mathcal{I}_s\simeq \Isomm^{\mathcal{X}_s,X}_k\simeq \Aut_k^X$ is nonempty at every $s \in S(k)$. Thus $S(k)\subseteq p(\mathcal{I}) = \bigcup_{i\in\N} p(\mathcal{I}_i)$ with $\I_i=\I^\ell_i\times_{S^\ell} S$. Again by \ref{componentsofisom}, each $p(\mathcal{I}_i)$ is constructible in $S$, and since each $\I_i$ is defined over $\ell$, it follows that each $p(\mathcal{I}_i)$ is defined over $\ell$ as well. Hence by Proposition \ref{countablecover} for some $j\in \N$ and $I:=\mathcal{I}_{j}$, the image $p(I)$ contains an open dense subset of $S$. Since $S$ is reduced, by \cite[Tag \href{https://stacks.math.columbia.edu/tag/052B}{\texttt{052B}}]{stacks} there exists $U\subseteq p(I)$ nonempty and open in $S$ such that $p:I_U\rightarrow U$ is faithfully flat. Then $(\id_{I_U}:I_U\to I_U)\in \Isom^{\XX_U,X\times_k U}_U (I_U)$ viewed as a morphism of $U$-schemes gives an isomorphism $\XX_{I_U}\simeq X\times_k I_U$ of $I_U$-schemes, proving that $\XX_U/U$ is fppf-locally trivial.
\medskip

Suppose furthermore that $I_s=\Aut_k^X$ is reduced. Then $I_s$ is smooth over $k$ by \cite[Proposition 1.28]{milne}, hence $p|_{I_U}:I_U\rightarrow U$ is smooth. By \cite[17.16.3]{EGA44}, there exists an étale surjective morphism $V\rightarrow U$ such that the restriction $I_V\rightarrow V$ admits a section $\sigma :V\rightarrow I_V$. Then $\sigma\in \Isomm^{\mathcal{X}_V,X\times_k V}_V(V)$ proving that $\mathcal{X}_V\simeq X\times_k V$. Therefore $f_U:\mathcal{X}_U\rightarrow U$ is isotrivial.
\end{proof}
\begin{remark}
	The above step is the only one in this proof where the assumption about $k$ being \textit{large} is used. Once the generic isotriviality holds for fibrewise trivial families over, $\overline{\mathbb F}$-varieties for $\mathbb F$ being a prime field, so does Theorem \ref{mainthm}, but the author does not know how to get rid of the assumption $\trdeg (k/\mathbb F)=\infty$.
\end{remark}

\section{Global vanishing of the Kodaira--Spencer map}
In this short section we review the construction of the Kodaira--Spencer map and prove that the Kodaira--Spencer map for fibrewise trivial families $f:\XX\to S$ satisfying assumptions as in Theorem \ref{mainthm} vanishes.
\subsection{Review of the Kodaira--Spencer map.} Let $f:\mathcal{X}\rightarrow S$ be a fibrewise trivial morphism of schemes with $S$ reduced. Since $f$ is smooth, recall that the cotangent sequence
\begin{equation}\label{42cotangent}
\begin{tikzcd}
	0\ar[r]&[-0.5 em]f^\ast\Omega^1_{S/k}\ar[r]&[-0.5 em]\Omega^1_{\mathcal{X}/k}\ar[r]&[-0.5 em] \Omega^1_{\mathcal{X}/S}\ar[r]&[-0.5 em] 0
\end{tikzcd}
	%\xymatrix{0\ar[r]&f^\ast\Omega^1_{S/k}\ar[r]&\Omega^1_{\mathcal{X}/k}\ar[r]& \Omega^1_{\mathcal{X}/S}\ar[r]& 0}
\end{equation}
is exact \cite[17.2.3]{EGA44}. The cotangent sheaf $\Omega^1_{\mathcal{X}/S}$ is locally free, which implies that $\Ext^1(\Omega^1_{\mathcal{X/S}},\OO_\XX)$ is trivial, hence the dualization of \eqref{42cotangent}  
\begin{equation*}
\begin{tikzcd}
	0\ar[r]&\shT_{\mathcal{X}/S}\ar[r]&\shT_{\XX/k}\ar[r]& f^\ast\shT_{S/k}\ar[r]& 0
\end{tikzcd}
	%\xymatrix{0\ar[r]&\shT_{\mathcal{X}/S}\ar[r]&\shT_{\XX/k}\ar[r]& f^\ast\shT_{S/k}\ar[r]& 0}
\end{equation*}
is also exact. After applying the left-exact functor $f_\ast$, we obtain a long exact sequence of $\OO_{S}$-modules
\begin{equation}\label{eq:long-exact-sequence-cotangent}
\begin{tikzcd}
	0\ar[r]&f_\ast\shT_{\mathcal{X}/S}\ar[r]&f_\ast\shT_{\XX/k}\ar[r]& f_\ast f^\ast\shT_{S/k}\ar{r}{\delta_{\XX/S}}& \RR^1f_\ast\shT_{\XX/S}\ar[r]&\cdots
\end{tikzcd}
	%\xymatrix{0\ar[r]&f_\ast\shT_{\mathcal{X}/S}\ar[r]&f_\ast\shT_{\XX/k}\ar[r]& f_\ast f^\ast\shT_{S/k}\ar[r]^\delta& \RR^1f_\ast\shT_{\XX/S}\ar[r]&\cdots}
\end{equation}
Denote the composition of $\delta_{\XX/S}$ with the adjunction unit \begin{equation}\shT_{S/k}\longrightarrow f_\ast f^\ast\shT_{S/k}\xrightarrow{\delta_{\XX/S}}\RR^1f_\ast\shT_{\XX/S}\end{equation} by $\KS_f$. Note that fibrewise triviality of $\XX/S$ implies $s\mapsto h^1(\XX_s,\shT_{\XX_s/k})$ is a constant function on $S(k)$. It follows from Grauert's Semicontinuity Theorem \cite[III.12.9]{hartshorne} that $\RR^1f_\ast\shT_{\XX/S}$ is locally free and that the natural morphism
\begin{equation*}
\RR^1f_\ast\shT_{\XX/S}(s)\longrightarrow H^1(\XX_s,\shT_{\XX_s/k})
\end{equation*}
 is an isomorphism. Therefore, the map $\KS_f$ specializes at a point $s\in S(k)$ under the above isomorphism to $\KS_f(s): \shT_{S/k}(s)\longrightarrow H^1(\XX_s,\shT_{\XX_s/k})$.
\begin{prop}\label{kodairavanishes}
	Assume $f\colon\XX\to S$ satisfies conditions of Theorem \ref{mainthm}. The Kodaira--Spencer morphism $\KS_f$ vanishes on $S$.
\end{prop}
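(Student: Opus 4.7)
The plan is to combine the generic isotriviality established in Theorem \ref{genericisotriviality} with the fact that the target $\RR^1 f_\ast \shT_{\XX/S}$ is locally free in order to propagate the vanishing from a dense open set to all of $S$.

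First, I apply Theorem \ref{genericisotriviality} to each irreducible component of $S$ (which is legitimate because $S$ is reduced of finite type over $k$, hence has finitely many components, each of which is itself a reduced finite type $k$-scheme satisfying the hypotheses of the theorem). This yields a dense open $U \subseteq S$ meeting every irreducible component such that $f_U\colon \XX_U \to U$ is isotrivial. Concretely, there is an étale surjective cover $\pi\colon V \to U$ with $\XX_V \cong X \times_k V$ as $V$-schemes.

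Next, I check that $\KS_{f_U}$ vanishes on $U$. For the trivial family $X \times_k V \to V$, the relative cotangent sequence \eqref{42cotangent} splits canonically (the projection to the $V$ factor provides a retraction), so $\delta_{X \times V/V} = 0$ and consequently $\KS_{f_V} = 0$. On the other hand, the Kodaira--Spencer map is compatible with flat base change: since $\pi$ is étale, $\shT_{V/k} = \pi^\ast \shT_{U/k}$, and flat base change in cohomology gives $\RR^1 (f_V)_\ast \shT_{\XX_V/V} = \pi^\ast \RR^1 (f_U)_\ast \shT_{\XX_U/U}$, under which $\KS_{f_V} = \pi^\ast \KS_{f_U}$. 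Because $\pi$ is faithfully flat, $\pi^\ast \KS_{f_U} = 0$ forces $\KS_{f_U} = 0$.

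Finally, I extend the vanishing from $U$ to all of $S$. The preamble to the proposition establishes that $\RR^1 f_\ast \shT_{\XX/S}$ is locally free on $S$. Since $S$ is reduced, any locally free sheaf on $S$ is torsion-free, and the image of $\KS_f$ injects into a torsion-free coherent sheaf, hence is torsion-free. Because $U$ meets every irreducible component of $S$, the image of $\KS_f$ has stalk zero at every generic point of $S$; a torsion-free coherent sheaf that vanishes at every generic point is identically zero. Therefore $\KS_f = 0$ on all of $S$.

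The only delicate point is the compatibility of the Kodaira--Spencer construction with étale base change and the vanishing on the trivial family, both of which are formal consequences of the functoriality of the cotangent sequence and flat base change for higher direct images; there is no serious obstacle.
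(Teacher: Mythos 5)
Your proof is correct, and its skeleton is the same as the paper's: generic isotriviality (Theorem \ref{genericisotriviality}) gives a dense open $U$, the Kodaira--Spencer map is shown to vanish on $U$, and then local freeness of $\RR^1f_\ast\shT_{\XX/S}$ together with reducedness of $S$ propagates the vanishing to all of $S$ (the paper phrases this last step as: the image of $\KS_f$ is a torsion subsheaf of a locally free sheaf, hence zero). The one place you diverge is the middle step: the paper quotes \cite[Corollary 2.6.11]{sernesi} to get the pointwise vanishing $\KS_f(s)=0$ at every closed point $s\in U(k)$ and then uses reducedness of $U$ plus local freeness to upgrade this to $\KS_f|_U\equiv 0$, whereas you argue sheaf-theoretically by pulling back along an \'etale surjective trivializing cover $\pi\colon V\to U$, using $\shT_{V/k}=\pi^\ast\shT_{U/k}$, flat base change for $\RR^1$, the splitting of the (co)tangent sequence for the product family, and faithful flatness of $\pi$. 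Both are fine; your route avoids the deformation-theoretic citation at the cost of checking the (formal but worth stating) base-change compatibility of $\KS$. One point deserves emphasis: your argument genuinely needs $\pi$ \'etale, since only then is $\pi^\ast\Omega^1_{U/k}\to\Omega^1_{V/k}$ an isomorphism --- the same reasoning with a merely fppf trivializing cover would be false, as the families of Section \ref{KSnotzero} are fppf-locally trivial yet have nowhere vanishing Kodaira--Spencer map. You handle this correctly by invoking the isotriviality conclusion of Theorem \ref{genericisotriviality}, which is exactly where the hypothesis that $\Aut^X_k$ is reduced enters. Your component-by-component invocation of the theorem and the passage from pointwise isotriviality to a single surjective \'etale cover (take the disjoint union of the \'etale neighborhoods) are harmless elaborations of what the paper does in one stroke.
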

\begin{proof}
	By Proposition \ref{genericisotriviality}, there exists an open dense subset $U\subset S$ such that $f_U$ is isotrivial. By \cite[Corollary 2.6.11]{sernesi}, $\KS_{f}(s)=0$ for all $s\in U(k)$. Since $\KS_f$ is a morphism of locally free sheaves and $S$ (hence $U$) is reduced, it follows that $\KS_f|_U\equiv 0$. Hence, the image of $\KS_{f}$ is a torsion subsheaf of $\RR^1f_\ast\shT_{\XX/S}.$ Since $\RR^1f_\ast\shT_{\XX/S}$ is locally free, it follows that the image must be trivial.	
\end{proof}
\begin{remark}
	In fact, the assumption that $\Aut_k^X$ is reduced is crucial for $\KS_f=0$. Later, in Section~\ref{KSnotzero}, we show the existence of fibrewise trivial families with nowhere vanishing Kodaira--Spencer maps. 
\end{remark}
\section{A variant of the valuative criterion of flatness}\label{valcritflat}
Our intermediate step in the proof of Theorems \ref{mainthm} is the reduction to the case when $S$ is a \emph{smooth} curve. As we show in Proposition \ref{isoimpliesflat}, isotriviality of a family $f\colon \XX\to S$ is equivalent to flatness of $\Isomm^{\XX,X\times S}_S$. It turns out that it is sufficient to check the latter condition only on smooth curves mapping to $S$. This is captured in the main result of this section, Proposition \ref{flatbccurves}. Its statement resembles the valuative criterion of flatness \cite[11.8.1]{EGA43}, but it appears that one cannot simply apply this result here, since not all valuation rings arise as stalks of structure sheaves of smooth $k$-curves. 
\begin{prop}\label{flatbccurves}
	Let $p\colon\I\rightarrow S$ be a finite type morphism with $S$ reduced and of finite type over $k$. Suppose furthermore that for every smooth integral curve $C$ over $k$ and $k$-morphism $C\rightarrow S$, the base change $p_C\colon\I_C\rightarrow C$ is flat. Then $p$ is flat.
\end{prop}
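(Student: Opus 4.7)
The plan is to argue by contradiction: supposing $p$ is not flat, I would construct a smooth integral $k$-curve $C$ with a morphism to $S$ along which $p_C$ fails to be flat. First, using openness of the flat locus of a finitely presented morphism (\cite[Tag \href{https://stacks.math.columbia.edu/tag/00R5}{\texttt{00R5}}]{stacks}) and the fact that every nonempty closed subset of a finite type $k$-scheme contains a closed point, it suffices to show $p$ is flat at every closed $z \in \I$. Setting $s := p(z) \in S$, $A := \OO_{S,s}$, $B := \OO_{\I, z}$, the task reduces to showing that $B$ is flat over the reduced Noetherian local ring $A$.

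Next, I would invoke the valuative criterion of flatness \cite[11.8.1]{EGA43}: failure of flatness produces a minimal prime $\mathfrak p \subset A$ and a discrete valuation ring $V$ together with a local morphism $A/\mathfrak p \hookrightarrow V$ such that $B \otimes_A V$ carries nonzero $V$-torsion. The issue flagged in the introduction to this section is precisely that $V$ need not arise as $\OO_{C,c}$ for a smooth $k$-curve $C$. The plan is therefore to \emph{approximate} $V$ by such a local ring while retaining the torsion witness.

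Since the torsion in $B \otimes_A V$ is witnessed by finitely many elements of $V$ and $B$ and finitely many relations among them, these data lie in some finitely generated $A$-subalgebra $A' \subset V$. Localizing $A'$ at the contraction of the maximal ideal of $V$ yields a Noetherian local domain $A'_{\mathfrak m'}$ essentially of finite type over $k$, dominating $A$, in which the torsion relation still holds. I would then cut $\Spec A'_{\mathfrak m'}$ down to a one-dimensional subscheme through its closed point by successive generic hyperplane sections, normalize the result to obtain a DVR $R$ essentially of finite type over $k$, and realize $R = \OO_{C,c}$ for the smooth integral $k$-curve $C$ with function field $\mathrm{Frac}(R)$ and closed point $c$ corresponding to $R$. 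The composed local morphism $A \to R$ spreads out to a $k$-morphism from an open neighborhood of $c$ in $C$ to $S$, sending $c$ to $s$, giving the desired curve. A Bertini-type argument for coherent sheaves, exploiting the finitely many associated primes of the $A'_{\mathfrak m'}$-modules carrying the torsion witness, ensures that a sufficiently generic sequence of hyperplane sections preserves the relation; the resulting nonzero $R$-torsion in $B \otimes_A R$ contradicts flatness of $p_C$, since $R$ is a DVR and hence a PID.

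The hard part will be this last preservation step: maintaining the non-vanishing of a specific module element under the hyperplane cuts is more delicate than the set-theoretic non-vanishing of a function, since the ``bad locus'' for a module element is governed by associated primes of coherent sheaves rather than by a single divisor. Making the Bertini-type genericity precise, via a codimension count over the parameter space of sections through the closed point and finiteness of associated primes, is where the technical work sits.
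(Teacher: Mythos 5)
Your opening moves are fine: reducing to closed points of $\I$, passing to $A=\OO_{S,s}$, $B=\OO_{\I,z}$, invoking the valuative criterion \cite[11.8.1]{EGA43}, and spreading the torsion witness out to a finitely generated $A$-subalgebra $A'\subset V$ are all legitimate. The genuine gap is the step you yourself defer as ``the hard part'': it is not a technical refinement but the entire content of the proposition. After spreading out, what you know is that the pullback of $\I$ to the integral scheme $\Spec A'$ (a finite type model of which is a reduced finite type $k$-scheme mapping to $S$) is non-flat at a point lying over $\mathfrak m'=\mathfrak m_V\cap A'$; producing a smooth $k$-curve mapping there along which non-flatness persists is exactly the contrapositive of the proposition for this pulled-back family, so the detour through the valuative criterion has not reduced the difficulty. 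Moreover, the advertised ``Bertini via finitely many associated primes of coherent sheaves'' has no obvious purchase: the module $M=B\otimes_A A'$ carrying the witness is only (a localization of) a finite type $A'$-algebra, not a finite $A'$-module, and the condition you must preserve, namely $t\notin\mathfrak q$ and $\beta\notin\mathfrak q M$ for the prime $\mathfrak q$ cutting out the curve, is a condition on quotients which associated primes do not govern; a priori the witness could die on every curve you cut, and it is precisely at this point that the hypothesis of flatness over curves has to be brought to bear, which your sketch never does.

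There is a second concrete defect: the DVR $R$ you construct need not be the local ring of a closed point of a smooth $k$-curve. The prime $\mathfrak m'=\mathfrak m_V\cap A'$ is in general not maximal, since the residue field of $V$ may have positive transcendence degree over $k$; cutting $\Spec A'_{\mathfrak m'}$ to dimension one through its closed point and normalizing then yields a DVR whose residue field is a nontrivial finitely generated extension of $k$, i.e. the local ring at a codimension-one point of a higher-dimensional smooth variety, to which the hypothesis of the proposition does not apply. Repairing this forces you to choose the curve inside $\Spec A'$ through a closed point of the closure of $\mathfrak m'$, where the survival of $\beta$ modulo the curve ideal is even harder to arrange. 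For contrast, the paper's proof avoids the valuative criterion altogether: it reduces to a smooth base by normalizing and blowing up and descending flatness along an injective local homomorphism via \cite[11.6.1]{EGA43}, and then inducts on $\dim S$ by cutting with a hypersurface through $s$, using the hypothesis on a smooth curve cut out by a complementary system of parameters to see that the hypersurface equation is a non-zero-divisor on $\OO_{\I,y}$, and concluding with the local criterion of flatness \cite[Corollary 6.9]{eisenbud}.
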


The fact itself seems to be known, being stated e.g. in McKernan's notes \cite[Lemma 4.10]{mckernan} but we couldn't find its proof. Hence, we present our own argument below.\medskip

The idea behind our proof is as follows: first, by blowing up singularities of $S$ we show that it is enough to restrict ourselves to morphisms with smooth targets. Then, working via induction on $\dim S$ we reduce the problem of flatness to the case when $S$ itself is a smooth integral curve.
	The main ingredient in the proof of Proposition \ref{flatbccurves} is the following fact.
\begin{prop}[{\cite[11.6.1]{EGA43}}]\label{unibranchflat}
	Let $A$ be a local, integral, geometrically unibranch \cite[Tag \href{https://stacks.math.columbia.edu/tag/06DT}{\texttt{06DT}}]{stacks} ring. Let $S=\Spec A$ and $f:X\rightarrow S$ be a finitely presented morphism. Let $A'$ be a local ring and $S'=\Spec A'$. Suppose $u^\sharp: A\rightarrow A'$ is a local injective homomorphism and let $u: S'\rightarrow S$ be the associated morphism of schemes. Let $x\in X$ with $s=f(x)$ closed in $S$. Suppose $x'\in X'=X\times_S S'$ is a point whose image in $X$ is equal to $x$ and whose image in $S'$ is the closed point $s'\in S'$. If $X'$ is $S'$-flat at $x'$, then $X$ is $S$-flat at $x$.
\end{prop}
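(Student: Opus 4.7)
The plan is to reduce the flatness of $p$ to the hypothesized case of a smooth integral $k$-curve in two stages: first, to $S$ smooth and integral, then by induction on $\dim S$ down to dimension one. The main technical tool is Proposition~\ref{unibranchflat}, which descends flatness along local injective homomorphisms into geometrically unibranch targets.

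\textbf{Reduction to $S$ smooth integral.} Since flatness is local on $S$, fix $s\in S$ and $x\in \I_s$; we wish to show $\OO_{\I,x}$ is flat over $\OO_{S,s}$. Any smooth integral $k$-curve $C\to S$ has irreducible image and hence factors through a unique irreducible component of $S$, so the curve hypothesis restricts to each component, allowing us to reduce to $S$ integral. (Care is required at intersection points lying on multiple components: one handles these by normalizing each component separately and applying Proposition~\ref{unibranchflat} branch-by-branch.) Now let $\pi\colon \tilde S\to S$ be a proper birational morphism with $\tilde S$ smooth---Hironaka's resolution in characteristic zero, or alterations combined with blow-ups in general; alternatively, we can work with the normalization $\tilde S$ alone, whose local rings are already geometrically unibranch, which is all that Proposition~\ref{unibranchflat} really requires. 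The curve hypothesis transfers to $\tilde S$ via composition with $\pi$. Granting the proposition for the smooth integral model $\tilde S$ yields that $p_{\tilde S}$ is flat. For $\tilde s\in \pi^{-1}(s)$ and $\tilde x\in \I_{\tilde S}$ lying above $x$ and $\tilde s$, the induced map $\OO_{S,s}\to \OO_{\tilde S,\tilde s}$ is local and injective (the latter from birationality of $\pi$ combined with integrality of $S$) with geometrically unibranch target, so Proposition~\ref{unibranchflat} yields flatness of $p$ at $x$.

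\textbf{Induction on $\dim S$ with $S$ smooth integral.} Induct on $d = \dim S$. The base case $d=1$ is exactly the hypothesis. For $d\geq 2$, fix a closed point $s\in S$ and a closed point $x\in \I_s$ (flatness being open on the source $\I$, it suffices to check at closed points). Choose a smooth hypersurface $H\subset S$ through $s$, cut out near $s$ by some $f\in \mathfrak m_{S,s}\setminus \mathfrak m_{S,s}^2$. By prime avoidance applied to the finitely many associated primes of $\OO_{\I,x}$ whose contractions to $\OO_{S,s}$ are proper sub-ideals of $\mathfrak m_{S,s}$, we may choose $f$ to be a non-zero-divisor on $\OO_{\I,x}$ simultaneously for all closed $x\in \I_s$. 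The curve hypothesis restricts to $H$ (smooth integral curves mapping to $H$ compose to smooth integral curves mapping to $S$), so by induction $p_H\colon \I_H\to H$ is flat. The local criterion of flatness then yields the desired flatness of $\OO_{\I,x}$ over $\OO_{S,s}$: $f$ is a NZD on $\OO_{\I,x}$, and the quotient $\OO_{\I,x}/f\OO_{\I,x}$ is flat over $\OO_{H,s}=\OO_{S,s}/f\OO_{S,s}$.

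\textbf{Main obstacle.} The most delicate step is the prime-avoidance in the inductive step: should some associated prime of $\OO_{\I,x}$ contract to $\mathfrak m_{S,s}$ itself (corresponding to an embedded associated point of the fiber $\I_s$ at $x$), no admissible NZD $f\in \mathfrak m_{S,s}$ exists, and the slicing argument breaks down. One circumvents this by first reducing to the case that $p$ is generically flat---which already rules out such embedded components on a dense open of $S$---and then proceeding via Noetherian induction on the closed non-flat locus, using that flatness is an open condition on $\I$. The secondary technicality of producing a smooth birational model $\tilde S$ in positive characteristic is sidestepped by working with just the normalization, again exploiting that Proposition~\ref{unibranchflat} is flexible enough to demand only geometric unibranchness and not full smoothness.
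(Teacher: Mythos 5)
Your proposal does not address the stated proposition at all. The statement you were asked to prove is Proposition \ref{unibranchflat} itself --- the descent-of-flatness result for a local injective homomorphism $A\to A'$ with $A$ local, integral and geometrically unibranch. What you have written is instead a proof of Proposition \ref{flatbccurves} (the reduction of flatness to smooth integral curves), and it explicitly invokes Proposition \ref{unibranchflat} as ``the main technical tool.'' Taken as a proof of the statement in question, it is therefore circular: the very result to be established is assumed. In the paper this proposition is not proved either; it is quoted from [EGA IV$_3$, 11.6.1], where it is obtained from the commutative-algebraic study of the flat locus and of unibranch local rings, not from any geometry of finite-type $k$-schemes.

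Note also why your strategy cannot be adapted to give a proof of \ref{unibranchflat}: in that statement $A$ is an arbitrary local integral geometrically unibranch ring and $A'$ an arbitrary local ring with $u^\sharp$ merely a local injection --- there is no base field, no finite-type or Noetherian structure on $S$, and no supply of curves or hypersurfaces to slice with, so the normalization/blow-up step and the inductive slicing via a non-zero-divisor $f\in\mathfrak m_s\setminus\mathfrak m_s^2$ have no analogue. Moreover the geometric unibranch hypothesis is essential (the conclusion fails without it), and exploiting it is exactly the nontrivial content of the EGA argument. Separately, as a proof of Proposition \ref{flatbccurves} your outline is close to the paper's (Lemmas \ref{smoothisenough} and \ref{curvesareenough}), but the prime-avoidance/Noetherian-induction patch you sketch for the zero-divisor issue is vaguer than the paper's argument, which instead shows directly that a zero-divisor choice of $f$ would produce a smooth curve $C\subset S$ through $s$ over which $\I_C$ fails to be flat, contradicting the hypothesis --- but that is a different statement from the one under review.
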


Our proof of Proposition \ref{flatbccurves} consists of the following two lemmas.\begin{lemma}\label{smoothisenough}
	Suppose Proposition \ref{flatbccurves} holds when $S$ is smooth over $k$. Then Proposition \ref{flatbccurves} is true for an arbitrary reduced $S$ of finite type over $k$.
\end{lemma}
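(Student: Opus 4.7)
The plan is to deduce flatness of $p\colon\I\to S$ from the smooth case of Proposition~\ref{flatbccurves} applied to a resolution of $S$, and then to transfer the flatness back to $S$ using Proposition~\ref{unibranchflat}. Since flatness can be checked locally on the target and at each closed point, I would work affine-locally on $S$ and reduce to checking flatness at a given closed point $s\in S$ and a given $x\in\I$ above $s$.

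First, choose a proper surjective morphism $\pi\colon\tilde S\to S$ with $\tilde S$ smooth of finite type over $k$; such a $\pi$ is provided by Hironaka's resolution in characteristic zero and, more generally, by de Jong's theorem on alterations in arbitrary characteristic. The pullback $\tilde\I:=\I\times_S\tilde S\to\tilde S$ still satisfies the curve-flatness hypothesis, since every smooth integral curve $C\to\tilde S$ composes with $\pi$ to a $k$-morphism $C\to S$ and $\tilde\I\times_{\tilde S}C=\I\times_SC$ is flat over $C$ by assumption. By the assumed smooth case of the proposition, $\tilde\I\to\tilde S$ is flat. If in addition $\OO_{S,s}$ is integral and geometrically unibranch, we may then pick $\tilde s\in\tilde S$ above $s$ on an irreducible component of $\tilde S$ dominating the one of $S$ through $s$ (so that $\OO_{S,s}\hookrightarrow\OO_{\tilde S,\tilde s}$ is a local injection), choose a point $x'\in\tilde\I$ above both $x$ and $\tilde s$, and apply Proposition~\ref{unibranchflat} to descend flatness from $x'$ to $x$.

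The main obstacle is that a reduced $S$ need not have geometrically unibranch local rings at all closed points, for instance at a point lying on several irreducible components of $S$. To reduce to the unibranch case I would interpose the normalization $\nu\colon S^\nu\to S$, which is finite surjective with $S^\nu$ normal and hence locally geometrically unibranch; the curve-flatness hypothesis for $p$ is inherited by $\nu^\ast\I\to S^\nu$, so by the previous paragraph $\nu^\ast\I$ is flat over $S^\nu$. The most delicate remaining step is deducing flatness of $\I$ over $S$ from that of $\nu^\ast\I$ over $S^\nu$: this cannot follow from pure ring-theoretic descent along the finite but non-flat map $\nu$, as simple nodal examples show, and so it must reinvoke the full curve-flatness hypothesis for $\I\to S$. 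My expectation is to pass between the branches of $S$ at a non-normal point $s$ via smooth curves through $s$ lying in each branch, using flatness of $\I\times_SC$ over $C$ in each case to pin down the behaviour of $\I$ across branches and thereby conclude flatness of $\I$ over $\OO_{S,s}$.
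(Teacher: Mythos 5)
Your handling of the unibranch/integral case is sound and close in spirit to the paper's: there, too, one passes to a modification of $S$ on which the smooth case applies and then descends flatness through Proposition~\ref{unibranchflat} via the injective local homomorphism $\OO_{S,s}\to\OO_{\widetilde S,\widetilde s}$. The difference is that the paper gets by with much lighter tools: instead of Hironaka or de Jong, it blows up $S$ at $s$ and normalizes, so that the fibre over $s$ becomes a divisor and therefore must meet the smooth locus of the (normal) modification, since a normal variety is regular in codimension one; this avoids resolution of singularities and alterations altogether and works uniformly in all characteristics.

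The genuine gap is exactly where you flag it: the reduction from a general reduced $S$ to the integral (hence, after normalizing, unibranch) case. Your proposal ends with an undeveloped ``expectation'' of transferring flatness across the branches of $S$ at a non-normal point by means of curves through $s$, and that is not an argument; as it stands the proof is incomplete. Moreover, your assertion that the needed descent ``cannot follow'' from descent along the finite non-flat map $\nu$ and is contradicted by nodal examples is misplaced in this setting: for coherent sheaves on locally finite type schemes over a reduced noetherian base, flatness \emph{can} be checked on the irreducible components and after base change to the normalization, and this is precisely what the paper invokes, citing \cite[11.5.2]{EGA43} to reduce to $S$ integral and \cite[11.5.3]{EGA43} to reduce to $S$ normal, before running the pointwise blow-up argument. (The nodal examples one writes down --- a branch, the normalization itself, the maximal ideal of the node --- all have non-flat pullbacks to the normalization, so they do not obstruct this descent.) To complete your proof you would either have to quote these EGA descent statements, as the paper does, or supply a genuine substitute for them; the curve-hypothesis-across-branches idea is not carried out and there is no indication it yields the local flatness of $\I$ at a point of $S$ lying on several components.
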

\begin{proof}
	Let $\{Z_i\}_{i=1}^m$ be the set of irreducible reduced components of $S$ and write $\I_i=\I\times_S Z_i$. By \cite[11.5.2]{EGA43}, it suffices to prove that each $\I_i$ is flat over $Z_i$. In other words, we may assume that $S$ is integral.

 Let $\nu:S'\rightarrow S$ be the normalization of $S$ in the function field $K(S)$. By Noether's theorem \cite[Corollary 13.13]{eisenbud} $\nu$ is a finite surjective morphism. By \cite[11.5.3]{EGA43}, it is enough to show that the base change $p': \I'=\I\times_{S}S'\rightarrow S'$ of $p$ is flat. Hence, we may additionally assume that $S$ is normal.
	
	Let $y\in \I(k)$ and $p(y)=s\in S$. Since $S$ is normal, $\OO_{S,s}$ is geometrically unibranch. Let $\rho: \widetilde S\rightarrow S$ be a normalization in $K(S)$ of a blow-up of $S$ at $s$. Then $\rho^{-1}(s)$ is a codimension $1$ subset in $\widetilde{S}$. Denote the smooth locus of $\widetilde S$ by $\widetilde S^{\circ}$. Since $\widetilde S$ is normal, $\widetilde S\setminus \widetilde S^{\circ}$ is a codimension $ \geq 2$ subset. Hence there exists $ \widetilde s \in \widetilde S^{\circ}$ satisfying $\rho(\widetilde s)=s$. By assumption $p_{\widetilde{S}^\circ}:\I_{\widetilde S^\circ}\rightarrow \widetilde S^{\circ}$ is flat. Since $\rho$ is birational, $\OO_{S,s}\rightarrow \OO_{\widetilde{S}^\circ, \widetilde s}$ is a local injective ring homomorphism. By Proposition \ref{unibranchflat}, since $\I_{\widetilde{S}^\circ}$ is $\widetilde{S}^\circ$-flat at $\widetilde y$ for any $\widetilde y$ lying above $y$ such that $p_{\widetilde S^\circ}(\widetilde y)=\widetilde s$, it follows that $\I$ is $S$-flat at $y$.
	\end{proof}
\begin{lemma}\label{curvesareenough}
	Suppose that Proposition \ref{flatbccurves} is true in case when $S$ is a smooth, reduced curve of finite type over $k$. Then Proposition \ref{flatbccurves} is true for every smooth, reduced $S$ of finite type over $k$.
\end{lemma}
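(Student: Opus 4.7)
My plan is to proceed by induction on $d = \dim S$. The base case $d \leq 1$ is immediate: when $d = 1$, each connected component of the smooth scheme $S$ is itself a smooth integral curve over $k$, so applying the hypothesis of Proposition \ref{flatbccurves} with $C$ taken to be such a component and the identity morphism directly yields the flatness of $p$. For $d \geq 2$, I assume the statement for all smooth $S'$ of dimension strictly less than $d$. Using openness of the flat locus and Noetherianity of $\I$, it suffices to verify flatness of $p$ at each closed point $y \in \I$; its image $s = p(y)$ is necessarily closed in $S$ since $k$ is algebraically closed. Put $A = \OO_{S, s}$, a regular local ring of dimension $d$ with residue field $k$, and $M = \OO_{\I, y}$.

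The heart of the argument will be an application of the local criterion of flatness \cite[Tag \href{https://stacks.math.columbia.edu/tag/00MK}{\texttt{00MK}}]{stacks}: given $f \in \mathfrak{m}_A \setminus \mathfrak{m}_A^2$ that is a non-zero-divisor on $M$, the module $M$ is flat over $A$ as soon as $M/fM$ is flat over $A/(f)$. The second condition is free for any such $f$: the vanishing locus $D = V(f) \subset S$ is a smooth divisor of dimension $d - 1$ through $s$ (because $A/(f)$ is regular local of dimension $d-1$), and every smooth curve $C \to D$ composes with the inclusion $D \hookrightarrow S$ to a smooth curve $C \to S$. Hence the curve hypothesis is inherited by $\I_D \to D$; the inductive hypothesis then yields flatness of $\I_D \to D$, equivalently flatness of $M/fM$ over $A/(f)$ after localization at $y$.

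What remains is to produce $f \in \mathfrak{m}_A \setminus \mathfrak{m}_A^2$ which is a non-zero-divisor on $M$. Since $M$ is a Noetherian local ring, it has only finitely many associated primes, and consequently the set of primes of $A$ of the form $\mathfrak{p} \cap A$ with $\mathfrak{p}$ an associated prime of the ring $M$ is finite; each non-maximal such prime projects to a proper $k$-subspace of $\mathfrak{m}_A / \mathfrak{m}_A^2$ (by Nakayama), so a standard prime-avoidance argument over the infinite field $k$ inside $\mathfrak{m}_A / \mathfrak{m}_A^2$ delivers the desired $f$ \emph{provided} that $\mathfrak{m}_A$ itself does not arise as one of these contracted primes. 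I anticipate this last condition to be the main obstacle. My approach is to rule it out by contradiction: if $0 \neq m \in M$ is annihilated by $\mathfrak{m}_A$, I pick a smooth curve $C \to S$ through $s$ whose differential $T_c C \to T_s S$ is non-zero, equivalently $\mathfrak{m}_A \cdot \OO_{C, c} = \mathfrak{m}_{C, c}$. Then $m \otimes 1 \in M \otimes_A \OO_{C, c} = \OO_{\I_C, y_C}$ would be annihilated by $\mathfrak{m}_{C, c}$ and therefore a torsion element in the $\OO_{C, c}$-module $\OO_{\I_C, y_C}$, contradicting the flatness of $\I_C \to C$ guaranteed by the hypothesis. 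The hardest step will be ensuring $m \otimes 1 \neq 0$; this will likely require a careful choice of $C$ together with a Tor-vanishing argument to guarantee that the $A$-submodule $k \cdot m \hookrightarrow M$ remains embedded after tensoring with $\OO_{C, c}$.
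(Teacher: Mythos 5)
Your skeleton is the same as the paper's: induct on the dimension, reduce to a closed point $y\in\I$ over $s=p(y)$, and apply the local criterion of flatness to some $f\in\mathfrak m_s\setminus\mathfrak m_s^2$ that is a non-zero-divisor on $M=\OO_{\I,y}$, the inductive hypothesis handling $M/fM$ over $\OO_{S,s}/(f)$ because the curve hypothesis is inherited by the smooth divisor $V(f)$ near $s$. Your prime-avoidance reduction (it suffices to rule out $\mathfrak m_s$ being a contracted associated prime of $M$) is correct and in fact a clean way to organize that part.

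However, the step you leave open is exactly where the curve hypothesis has to do its work, and as written it is a genuine gap. Knowing only that $0\neq m\in M$ is killed by $\mathfrak m_s$ and that the chosen curve has nonzero differential at $c$ gives no control on whether $m\otimes 1\neq 0$ in $M\otimes_A\OO_{C,c}=M/JM$: nothing prevents $m\in JM$. A concrete illustration: take $A=k[x,y]_{(x,y)}$, $M=A/(x^2,xy)$, $m=\bar x$, which is killed by $\mathfrak m=(x,y)$; for the smooth curve $C=V(x)$ (nonzero differential at the origin) one has $m\in JM$, $m\otimes 1=0$, and $M/JM\simeq A/(x)$ is flat over $\OO_{C,s}$, so this curve detects nothing. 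Thus the curve must be chosen adapted to the torsion, and the hoped-for ``Tor-vanishing'' is not available for free, since $\OO_{C,c}$ is very far from flat over $A$. This is precisely how the paper closes the argument: having fixed $f\in\mathfrak m_s\setminus\mathfrak m_s^2$ with $fm=0$, it completes $f$ to a regular system of parameters $f,g_1,\dots,g_r$ and takes for $C$ the curve cut out by $J=(g_1,\dots,g_r)$, so that $\OO_{C,s}=\OO_{S,s}/J$ is a DVR with uniformizer $f$, and then contradicts the assumed flatness of $\I_C\to C$; i.e.\ the curve is a transverse slice chosen relative to $f$, not a generic curve with nonzero differential. Until you supply such an adapted choice (or an argument that a sufficiently general curve works, e.g.\ via associated primes of $M$), the contradiction with the hypothesis is never actually reached, so the decisive step of the lemma is missing.
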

\begin{proof}
	We may assume that $S$ is affine. Fix a point $y\in \I(k)$ and let $s=p(y)$. We argue by an induction on $\dim \OO_{S,s}$. The case $\dim \OO_{S,s}=1$ holds by our assumptions. Suppose the result holds for every $S$ of dimension smaller than $d$. Let $\dim S=d$ and let $\mathfrak{m}$ be the maximal ideal of $\OO_{S,s}$. Choose an element $f\in \mathfrak{m}\setminus \mathfrak{m}^2$. Suppose $f$ is a zero-divisor in $\OO_{\I,y}$. Let $g_1,g_2,\ldots, g_r\in\mathfrak{m}$ be such that together with $f$ they form a basis of the $k$-vector space $\mathfrak{m}/\mathfrak{m}^2$. Then $J=(g_1,g_2,\ldots, g_r)\subset \Gamma(S,\OO_{S,s})$ is an ideal satisfying $J+(f)=\mathfrak{m}$ (by Nakayama's Lemma) and $J\cap (f)=0\mod \mathfrak{m}^2$. Let $C$ be the curve in $S$ defined in a neighbourhood of $s$ by the ideal $J$. Then $\OO_{C,s}=\OO_{S,s}/J$ is a discrete valuation ring with uniformiser $f$ so $C$ is smooth at $s$. Let $\I_C=\I\times_S C$. Then $\OO_{\I_C,y}$ is not flat over $\OO_{C,s}$, a contradiction with our assumptions.
	
	Therefore the $\OO_{S,s}$-module homomorphism $\times f:\OO_{\I,y}\rightarrow \OO_{\I,y}$ is injective. Let $S'$ be the closed subscheme of $S$ cut out by $f=0$ and let $\I':=\I\times_S S'$. Since $f$ is not a zero divisor, $\dim S'<\dim S$ and by inductive assumption $\OO_{\I',y}$ is $\OO_{S',s}$-flat. Thus $\OO_{\I,y}$ is $\OO_{S,s}$-flat \cite[Corollary 6.9]{eisenbud}.
\end{proof}
\begin{proof}[Proof of \ref{flatbccurves}]
	Clearly the result holds in the case when $S$ is a smooth, integral curve. Now, combine Lemma \ref{smoothisenough} with Lemma \ref{curvesareenough}.
\end{proof}
\section{Proof of Theorem \ref{mainthm} when $\chr k=0$}\label{mainresult1}
In this section we prove Theorem \ref{mainthm}. Using global vanishing of Kodaira--Spencer map $\KS_f$, we show formal triviality of $f$ at every geometric point of $S$. We do so by  reducing the problem to the case when $S$ is a smooth curve. Then we prove the desired result by lifting $\id_{X}$ inductively to $S_n$-isomorphisms $\XX_{S_n}\rightarrow X\times S_n$ for all $n\geq 1$.
\medskip

\subsection{Formal triviality}\label{sef:formal-triviality} First, let us recall the notion of formal triviality and how formal triviality implies isotriviality in the case of projective families.
Let $f:\XX\rightarrow S$ be a morphism of schemes and let $s\in S(k)$. Denote the maximal ideal of $\OO_{S,s}$ with $\mathfrak m_s$ and write $S_n = \Spec \OO_{S,s}/\mathfrak m_s^{n+1}$. Take $\XX_n := \XX_{S_n}$, in particular $\XX_0=\XX_s$. Then, each $f_n:\XX_n\rightarrow S_n$ is a local deformation of $f_0:\XX_0\rightarrow S_0$. These mappings are compatible, i.e. the following diagram

\begin{equation*}
\begin{tikzcd}
	\cdots\ar[r]&\XX_{n-1}\ar{d}{f_{n-1}}\ar[r]& \XX_{n}\ar{d}{f_n}\ar[r]& \XX_{n+1}\ar{d}{f_{n+1}}\ar[r]& \cdots\\[1 em]\cdots\ar[r]& S_{n-1}\ar[r]& S_n\ar[r]& S_{n+1}\ar[r]&\cdots
\end{tikzcd}
	%\xymatrix{\cdots\ar[r]&\XX_{n-1}\ar[d]^{f_{n-1}}\ar[r]& \XX_{n}\ar[d]^{f_n}\ar[r]& \XX_{n+1}\ar[d]^{f_{n+1}}\ar[r]& \cdots\\\cdots\ar[r]& S_{n-1}\ar[r]& S_n\ar[r]& S_{n+1}\ar[r]&\cdots}
\end{equation*}
commutes. Passing to the colimit, it gives an adic morphism of formal schemes $\mathfrak{X}\rightarrow\widehat{S}_s$, an element of $\widehat{\Def_{\XX_s}}(\widehat{\OO}_{S,s})$.
\begin{defn}\label{formaltriv}
	We say that $\XX/S$ is \textbf{formally trivial} at $s\in S(k)$ if for all $n$, $f_n:\XX_n\rightarrow S_n$ is a trivial deformation of $f_0$.
\end{defn}

Clearly, if a family $\XX/S$ is isotrivial at $s\in S(k)$, then it is formally trivial. The converse holds as well, provided the family in question is projective. Our proof of this fact is a variant of \cite[Corollary 2.6.10]{sernesi}.
\begin{prop}\label{formalisiso}
	Suppose $f$ is a flat projective family which is formally trivial at $s\in S(k)$. Then $f$ is isotrivial at $s$.
\end{prop}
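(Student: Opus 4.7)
The plan is to produce, for some \'etale neighborhood $V\to S$ of $s$, a section of the Isom-scheme $I:=\Isomm^{\XX,X\times S}_S$ over $V$: by the exponential law \eqref{eq:explaw}, such a section is the same datum as an isomorphism $\XX\times_S V\simeq X\times V$ of $V$-schemes, which is exactly what isotriviality of $f$ at $s$ requires. Recall that $I$ is representable by a scheme locally of finite type over $S$ by Proposition \ref{prop:isomrepresentable}.

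First, I would repackage formal triviality as an $S$-morphism $\hat\sigma\colon\Spec\widehat{\OO}_{S,s}\to I$. Formal triviality supplies compatible trivializations $\psi_n\colon\XX_n\xrightarrow{\sim} X\times S_n$ restricting to the identity on the special fibre; compatibility is automatic whenever $\Def_{\XX_s}$ is pro-representable, and in our setting it is guaranteed by the inductive construction of formal triviality from the vanishing of $\KS_f$ carried out in Section~\ref{mainresult1}. Under \eqref{eq:explaw} the $\psi_n$'s produce $S$-morphisms $\sigma_n\colon S_n\to I$, each topologically constant at the $k$-point $p_0:=\id\in\Aut^X_k(k)=I_s(k)$. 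Consequently every $\sigma_n$ factors through one common affine open $U\subset I$ of finite type over $S$ containing $p_0$; since $\widehat{\OO}_{S,s}$ is complete Noetherian and $U/S$ is of finite type, one has
\[
\Hom_S\bigl(\Spec\widehat{\OO}_{S,s},\,U\bigr)=\varprojlim_n \Hom_S(S_n,\,U),
\]
so the compatible system $(\sigma_n)_n$ algebraizes to the desired $\hat\sigma$.

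Next, I would apply Artin approximation \cite[Tag \href{https://stacks.math.columbia.edu/tag/07QY}{\texttt{07QY}}]{stacks} to the finite type $S$-scheme $U$ at the formal point $\hat\sigma$: this provides an \'etale neighborhood $V\to S$ of $s$ together with a section $\sigma_V\colon V\to U_V\subset I_V$ of $I\to S$ over $V$ (agreeing with $\hat\sigma$ modulo, say, $\mathfrak{m}_s^2$, although mere existence is all that is needed). The exponential law then converts $\sigma_V$ into an isomorphism $\XX\times_S V\simeq X\times V$ of $V$-schemes, proving isotriviality of $f$ at $s$.

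The main difficulty lies in the first step, namely assembling the a priori non-canonical $\psi_n$'s into a compatible system so that the $\sigma_n$'s glue to $\hat\sigma$. Without compatibility the $\sigma_n$'s still land in $U$ but one only obtains a collection of $S_n$-points rather than a single $\widehat{\OO}_{S,s}$-point. In the paper this difficulty is bypassed entirely because formal triviality is produced by explicitly lifting $\id_X$ through the nilpotent thickenings $S_n\hookrightarrow S_{n+1}$, so compatibility comes for free from the construction.
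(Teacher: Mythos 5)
Your overall strategy---convert formal triviality into a $\widehat\OO_{S,s}$-point of the Isom scheme and then descend it to an \'etale neighborhood by Artin approximation---matches the paper's, but your algebraization step is genuinely different. The paper passes from formal triviality to an isomorphism of the formal completions of $\XX$ and $X\times S$ over $\Spf\widehat\OO_{S,s}$ and then algebraizes it by Grothendieck's Existence Theorem \cite[Corollary 8.4.7]{FGAexplained}, obtaining $\xi\in\Isom^{\XX,X\times S}_S(\widehat\OO_{S,s})$, to which Artin approximation \cite{Artin} is applied (only local finite presentation of the functor is used there). You instead exploit representability of $\Isomm^{\XX,X\times S}_S$ (Proposition \ref{prop:isomrepresentable}) together with the elementary observation that, for an affine chart $U$ of finite type over an affine open of $S$, one has $\Hom_S(\Spec\widehat\OO_{S,s},U)=\varprojlim_n\Hom_S(S_n,U)$, since a homomorphism into $\widehat\OO_{S,s}=\varprojlim_n\OO_{S,s}/\mathfrak m_s^{n+1}$ is a compatible system of homomorphisms; and indeed all $\sigma_n$ factor through one chart because their set-theoretic image is the single point $p_0$. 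This is a correct and more elementary route that avoids Grothendieck existence entirely, at the price of using the scheme structure on Isom rather than only the functor.

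The weak point is exactly the one you flag: Definition \ref{formaltriv} only asserts that each $f_n$ is a trivial deformation, and gives no compatible system $(\psi_n)_n$; without compatibility your inverse-limit argument produces nothing. To be fair, the paper's proof makes the same silent jump (an isomorphism of formal completions over $\Spf\widehat\OO_{S,s}$ \emph{is} a compatible system), so you have identified a real subtlety rather than introduced one. But your two proposed fixes do not cover the proposition as stated: pro-representability of $\Def_{\XX_s}$ is not a hypothesis, and the inductive lifting of Section \ref{mainresult1} is specific to characteristic $0$, while Proposition \ref{formalisiso} is also used in Section \ref{mainresult2}, where Frobenius descent only yields trivializations at the levels $p^m-1$ with no a priori compatibility. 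Two ways to close the gap: under the standing hypothesis that $\Aut^X_k$ is reduced, hence smooth, the restriction maps $\Autt^X_k(S_{n+1})\to\Autt^X_k(S_n)$ are surjective by the infinitesimal lifting criterion, and since the set of trivializations of $\XX_n$ is a torsor under $\Autt^X_k(S_n)$ one can correct the $\psi_n$'s inductively to make them compatible; alternatively, without extra hypotheses, a Mittag--Leffler argument on the (constructible, hence eventually stabilizing) images of $\Autt^X_k(S_m)\to\Autt^X_k(S_n)$ shows that the inverse limit of the torsors of trivializations is nonempty, in the spirit of \cite[Section 2.6]{sernesi}. Spell out one of these and your argument is complete.
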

\begin{proof}
The functor $F=\Isom^{\XX,X\times S}_S$ is locally of finite presentation. Since $f$ is formally trivial at $s\in S$, the formal completions of $\XX$ and $X\times S$ along $s$ are isomorphic as formal schemes over $\Spf\widehat{\OO}_{S,s}$. By Grothendieck's Existence Theorem (\cite[Corollary 8.4.7]{FGAexplained}, see also \cite[Theorem 2.5.11]{sernesi}), this isomorphism is induced by a unique isomorphism $\xi:\XX_{\widehat\OO_{S,s}}\rightarrow X\times \Spec \widehat\OO_{S,s}$ of $\widehat{\OO}_{S,s}$-schemes. Hence $\xi\in F(\widehat\OO_{S,s})$. By Artin's Approximation Theorem \cite[Theorem 1.12]{Artin}, there exists $\widetilde\xi\in F(\widetilde\OO_{S,s})$ with $\xi\equiv \widetilde\xi\pmod{\mathfrak{m}}$ for $\widetilde\xi\in F(\widetilde\OO_{S,s})$ the henselization of $\OO_{S,s}$. Let $\varphi:S'\rightarrow S$ be an \'etale neighborhood of $s$ such that $\widetilde\xi$ is the image of an element $\xi'\in F(S')$ along $F(\varphi)$. Then $\XX_{S'}$ and $X\times S'$ are isomorphic as $S'$-schemes. The result follows.
\end{proof}
As announced in Section \ref{valcritflat}, we show that under assumptions of Theorem \ref{mainthm}, isotriviality is equivalent to flatness of $\Isomm^{\X, X\times_k S}_S$.
\begin{prop}\label{isoimpliesflat}
	Let $f:\XX\rightarrow S$ be a smooth, projective, fibrewise trivial morphism with fiber $X$. Suppose $\Aut_k^X$ is reduced. Then $f$ is isotrivial if and only if\; $\Isomm^{\XX,X\times S}_S$ is flat over $S$.
\end{prop}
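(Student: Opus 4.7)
The plan is to prove the two implications separately, reusing machinery already deployed in the proof of Theorem \ref{genericisotriviality}.

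For the forward direction, suppose $f$ is isotrivial. I would exploit that flatness is \'etale-local on the target: cover $S$ by \'etale neighborhoods $\{V_i \to S\}$ on each of which $\XX_{V_i} \cong X \times_k V_i$. Base changing, one has $\Isomm^{\XX, X \times S}_S \times_S V_i \cong \Aut^X_k \times_k V_i$, which is flat over $V_i$ because $\Aut^X_k \to \Spec k$ is. Descent of flatness along the \'etale cover $\{V_i \to S\}$ then yields flatness of $p \colon \I := \Isomm^{\XX, X \times S}_S \to S$.

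For the reverse direction, assume $p \colon \I \to S$ is flat. By \ref{componentsofisom}, $\I$ decomposes as a countable disjoint union of quasi-projective $S$-schemes, so in particular $p$ is locally of finite presentation (since $S$ is of finite type over $k$). For every closed point $s \in S(k)$, fibrewise triviality of $f$ gives an isomorphism $\I_s \cong \Isomm^{X, X}_k = \Aut^X_k$; by hypothesis $\Aut^X_k$ is reduced, hence smooth over $k$ by \cite[Proposition 1.28]{milne}, and nonempty as it contains $\id_X$. The fibral criterion for smoothness then upgrades flatness to smoothness of $p$, while nonemptiness at every closed fiber yields surjectivity of $p$.

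To conclude, I would invoke \cite[17.16.3]{EGA44} exactly as in the proof of Theorem \ref{genericisotriviality}: for every $s \in S(k)$ there exists an \'etale neighborhood $\varphi \colon V \to S$ of $s$ and a section $\sigma \colon V \to \I_V$ of $p_V$. Through the exponential law (\ref{exponentiallaw}), $\sigma \in \Isom^{\XX, X \times_k S}_S(V)$ corresponds to an isomorphism $\XX_V \xrightarrow{\sim} X \times_k V$ of $V$-schemes, establishing isotriviality of $f$ at $s$. No serious obstacle is anticipated beyond bookkeeping; the only subtlety is checking that $p$ is locally of finite presentation (needed to deduce smoothness from flatness plus smooth fibers), which follows from the countable decomposition of $\I$ into quasi-projective components together with $S$ being of finite type over $k$.
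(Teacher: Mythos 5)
Your proposal is correct and follows essentially the same route as the paper: reducedness of $\Aut^X_k$ gives smoothness of the closed fibres of $\Isomm^{\XX,X\times S}_S\to S$, so flatness is upgraded to smoothness and \cite[17.16.3]{EGA44} produces \'etale-local sections, hence isotriviality; conversely, \'etale-local triviality identifies $\I_{V}$ with $\Aut^X_k\times_k V$, and flatness descends along the \'etale cover (the paper phrases this via a single surjective \'etale trivializing morphism and \cite[Tag 02JZ]{stacks}, which is the same argument).
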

\begin{proof}
	By our assumptions $\Aut_k^X$ is $k$-smooth \cite[Proposition 1.28]{milne}, thus $\Isomm^{\XX,X\times S}_S$ is flat over $S$ if and only if it is smooth over $S$.\medskip
	
	If $\I=\Isomm^{\XX,X\times S}_S$ is smooth over $S$, then by \cite[17.16.3 (ii)]{EGA44} there exists an \'etale surjective morphism $S'\rightarrow S$ and an $S$-morphism $\sigma:S'\rightarrow \I$, proving that $\XX_{S'}\simeq X\times S'$.
	
	Conversely, suppose that $f$ is isotrivial. Let $\phi:S'\rightarrow S$ be an étale surjective morphism such that $\XX_{S'}\simeq X\times_S S'$ as $S'$-schemes. Then $\I_{S'}\simeq \Isomm^{\XX,X\times S}_{S}\times_S S'\simeq \Aut_k^X\times_k S'$ is obviously $S'$-flat. We have a Cartesian diagram
	\begin{displaymath}
		\begin{tikzcd}
			\I_{S'}\ar{r}{\psi}\ar{d}[swap]{p'} & \I \ar{d}{p}\\[1em] S'\ar{r}[swap]{\phi} & S
		\end{tikzcd}
	\end{displaymath}
	Clearly $\psi$ is flat and so is the composition $\phi\circ p'=p\circ \psi$. The result follows from \cite[Tag \href{https://stacks.math.columbia.edu/tag/02JZ}{\texttt{02JZ}}]{stacks} applied to $X=\I$, $Y=\I_{S'}$ and $\mathcal{G}=\OO_{\I}$.
\end{proof}

Now we are ready to prove our main result, under the assumption $\chr k=0$.
\begin{proof}[Proof of Theorem \ref{mainthm} in the case $\chr k=0$.]
By Proposition \ref{isoimpliesflat} it suffices to show that $\Isomm^{\XX,X\times S}_S$ is flat over $S$. By Lemma \ref{smoothisenough} and Lemma \ref{curvesareenough} it is enough to prove it under the assumption that $S$ is a smooth integral curve. Fix $s\in S(k)$. By Proposition \ref{formalisiso} and Proposition \ref{isoimpliesflat}, it suffices to show that $\XX/S$ is formally trivial at $s$. Write $\widehat\OO_{S,s}=k[[t]]$, let $S_n=\Spec k[t]/(t^{n+1})$, $\XX_n = \XX_{S_n}$ and $f_n:\XX_n\rightarrow S_n$.
	\\[-6 pt]
	
	We proceed by induction on $n$. When $n=0$, there is nothing to prove. Suppose that $n\geq 1$ and that we are given an isomorphism $\phi: \XX_{n-1}\rightarrow X\times S_{n-1}$ of $S_{n-1}$-schemes. Let $\mathfrak{U}=\{\mathcal{U}_i\}$ be an affine open cover of $\XX_n$, $U_i = \UU_i\times_{S_n}S_0$. Then $\{U_i\}$ is an affine open cover of $X_0$ and each $\UU_i\rightarrow S_n$ is a deformation of $U_i\rightarrow S_0$. Thus there exist isomorphisms $\phi_i:\UU_i\rightarrow U_i\times S_n$ \cite[Theorem 1.2.4]{sernesi}, each being a lift of the restriction of $\phi$ to $\UU_i\times_{S_n}S_{n-1}$ for every $i\in I$.
	\\[-6 pt]
	
	Denote $\mathcal{U}_{ij}=\mathcal{U}_i\cap\mathcal{U}_{j}$ and $U_{ij}=U_i\cap U_j$. Since all $X_m$ are separated, $U_{ij}$ and $\mathcal{U}_{ij}$ are also affine. Write $\phi_{ij}=\phi_i^{-1}\circ\phi_j$ and let $\psi_{ij}$ be the automorphism of $U_{ij}\times S_n$ making the following diagram of isomorphisms
\begin{displaymath}
\begin{tikzcd}
	\mathcal{U}_{ij}\ar{d}[swap]{\phi_{ij}}\ar{r}{\phi_j}&U_{ij}\times S_n\ar{d}{\psi_{ij}}\\[1.3 em] \mathcal{U}_{ij}\ar{r}[swap]{\phi_j}& U_{ij}\times S_n
\end{tikzcd}
	%\xymatrix{\mathcal{U}_{ij}\ar[d]_{\phi_{ij}}\ar[r]^{\phi_j}&U_{ij}\times S_n\ar[d]^{\psi_{ij}}\\ \mathcal{U}_{ij}\ar[r]^{\phi_j}& U_{ij}\times S_n}
\end{displaymath}
commute. An explicit calculation shows that $\psi_{ij} =\left(\phi_j|_{\mathcal{U}_{ij}}\right)\circ\left(\phi_i|_{\mathcal{U}_{ij}}\right)^{-1}$. By our inductive assumption each $\psi_{ij}$ descends to identity on $U_{ij}\times S_{n-1}$ under the base change $S_{n-1}\rightarrow S_n$. For $\pi: \Spec B\to \Spec A$, let $\pi^\sharp: A\to B$ denote the associated homomorphism of rings. We obtain $\psi_{ij}^\sharp=\id_{U_{ij}\times S_n}^\sharp+t^n\dd_{ij}$ for some $\dd_{ij}\in\Gamma(U_{ij},\shT_{U_{ij}\times S_n})$ and $\phi_{ij}^\sharp = \id_{\mathcal{U}_{ij}}^\sharp+\tau^n\phi^\ast_j\dd_{ij}$ for $\tau = f_n^\ast t$, the image of $t$ under the homomorphism $\Gamma(S_n,\OO_{S_n})\rightarrow \Gamma(\XX_n,\OO_{\XX_n})$ and $\phi^\ast_j\dd_{ij}:=\phi_j^{-1}\circ\dd_{ij}\circ\phi_j$. Also, observe that the equality $\phi_{ij}=\phi^{-1}_{ji}$ implies that $\dd_{ij}=-\dd_{ji}$ for all $i,j$.\medskip

Since $f$ is smooth, we have a short exact sequence
\begin{equation}\label{formaltrivialitytangent}
\begin{tikzcd}
	0\ar[r] &[-0.5 em] \shT_{X_n/S_n}\ar[r] &[-0.5 em]\shT_{X_n/k}\ar[r]& [-0.5 em]f_n^\ast\shT_{S_n/k}\ar[r] & [-0.5 em] 0.
\end{tikzcd}
 	%\xymatrix{0\ar[r] & \shT_{X_n/S_n}\ar[r] &\shT_{X_n/k}\ar[r]& f_n^\ast\shT_{S_n/k}\ar[r] &  0}.
 \end{equation}
The existence of isomorphisms $\phi_i$ implies that the above short exact sequence is locally split. For each $\UU_i$, consider the section $\sigma_i:\left.f_n^\ast\shT_{S_n/k}\right|_{\UU_i}\rightarrow \left.\shT_{X_n/k}\right|_{\UU_i}$ given by the composition \begin{equation}\label{section}
	f_n^\ast\shT_{S_n/k}|_{\mathcal{U}_i}\hookrightarrow \phi_i^\ast(\pr_{1i}^\ast\mathscr{T}_{U_i/k}\oplus \pr_{2i}^\ast\shT_{S_n/k})\simeq \mathscr{T}_{\mathcal{X}_n/k}|_{\mathcal{U}_i}.
\end{equation}
with $\pr_{1i}:U_i\times S_n\rightarrow U_i$ and $\pr_{2i}: U_i\times S_n\rightarrow S_n$ projections and the first arrow in \eqref{section} being the inclusion of the second direct summand. In particular, $f_n|_{\mathcal{U}_i}=\pr_{2i}\circ \phi_i$.
Consider the Čech complex associated to (\ref{formaltrivialitytangent}) and the covering $\mathfrak{U}$:
	\begin{displaymath}
	\begin{tikzcd}
	& 0\ar{d} & 0\ar{d} & 0\ar{d} &\\
		0\ar[r]&\bigoplus_i\Gamma(\mathcal{U}_i,\shT_{\XX_n/S_n})\ar[d]\ar[r] & \bigoplus_i\Gamma(\mathcal{U}_i,\shT_{\XX_n/k})\ar{d}{\delta} \ar[r]& \bigoplus_i\Gamma(\mathcal{U}_i,f_n^\ast \shT_{S_n/k})\ar[d]\ar[r]& 0\\
	 0\ar[r]&\bigoplus_{i,j}\Gamma(\mathcal{U}_{ij},\shT_{\XX_n/S_n})\ar[r]\ar{d} & \bigoplus_{i,j}\Gamma(\mathcal{U}_{ij},\shT_{\XX_n/k})\ar[r]\ar{d}& \bigoplus_{i,j}\Gamma(\mathcal{U}_{ij},f_n^\ast \shT_{S_n/k})\ar[r]\ar{d}& 0\\
	 & \vdots & \vdots & \vdots &\\
	\end{tikzcd}
\end{displaymath}
Let $\left[\left(\tau\frac{\dd}{\dd \tau}\right)|_{\mathcal{U}_i}\right]_i\in \bigoplus_i\Gamma(\mathcal{U}_i,f_n^\ast \shT_{S_n/k})$ be the product of restrictions of the global section $\tau\frac{\dd}{\dd \tau} = f_n^\ast\left(t\frac{\dd}{\dd t}\right)\in\Gamma(\XX_n,f_n^\ast\shT_{S_n/k})$ to each of $\mathcal{U}_i\in \mathfrak{U}$. By the above observations, the considered 0-cocycle lifts via chosen sections $\sigma_i$'s to the element $\left[\phi_i^\ast\pr_{2i}^\ast \left(t\frac{\dd}{\dd t}\right)\right]_i\in \bigoplus_i\Gamma(\mathcal{U}_i,\shT_{\XX_n/k})$. Write $t_i=\pr_{2i}^\ast t\in \Gamma(U_i\times S_n,\OO_{U_i\times S_n})$. We have $\OO_{U_i\times S_n}\simeq \OO_{U_i}[t_i]/(t_i^{n+1})$. It follows that $\pr_{2i}^\ast\left(\frac{\dd}{\dd t}\right) = \partial_i:=\frac{\partial}{\partial t_i}$, which is the $\OO_{U_i}$--derivation with respect to the variable $t_i$.\medskip

The map $\delta$ sends $\left[\phi_i^\ast\pr_{2i}^\ast \left(t\frac{\dd}{\dd t}\right)\right]_i$ to the element of $\bigoplus_{i,j}\Gamma(\mathcal{U}_{ij},\shT_{\XX_n/k})$ given by
\begin{displaymath}
	\delta\left[\phi_i^\ast\pr_{2i}^\ast \left(t\frac{\dd}{\dd t}\right)_i\right]_{ij} = \left.\phi_i^\ast\pr_{2i}^\ast \left(t\frac{\dd}{\dd t}\right)\right|_{\UU_{ij}} - \left.\phi_j^\ast\pr_{2j}^\ast \left(t\frac{\dd}{\dd t}\right)\right|_{\UU_{ij}} = \left.\phi_i^\ast \left(t_i\partial_i\right)\right|_{\UU_{ij}} - \left.\phi_j^\ast\left(t_j\partial_j\right)\right|_{\UU_{ij}}.
\end{displaymath}
Now, observe that
\begin{align*}
\begin{aligned}
	\left.\phi_i^\ast \left(t_i\partial_i\right)\right|_{\mathcal{U}_{ij}} - \left.\phi_j^\ast\left(t_j\partial_j\right)\right|_{\mathcal{U}_{ij}} &= \left.\phi_i^\ast\left(t_i\partial_i - (\phi_j\phi_i^{-1})^\ast \left(t_j\partial_j \right)\right)\right|_{\mathcal{U}_{ij}}\\& = \phi^\ast_i\left(t_i\partial_i - \psi_{ij}^\ast \left(t_j\partial_j\right)\right).
	\end{aligned}
\end{align*}
Therefore, we obtain
\begin{align*}\label{pullbackused}
\begin{aligned}
	\psi_{ij}^\ast\left( t_j\frac{\partial}{\partial t_j}\right) &= \psi_{ij}^\sharp\circ\left( t_j\frac{\partial}{\partial t_j}\right)\circ \psi_{ji}^\sharp\\& = \psi_{ij}^\sharp \left(t_j\frac{\partial}{\partial t_j}\left(\id-t_j^n\dd_{ij}\right)\right) = \psi^\sharp_{ij}\left(t_j\frac{\partial}{\partial t_j}-nt_j^{n}\dd_{ij}-\underbrace{t_j^{n+1}\dd_{ij}\circ\frac{\partial}{\partial t_j}}_{=0}\right) \\&= t_j\frac{\partial}{\partial t_j} - nt_j^n\dd_{ij} = t_j\frac{\partial}{\partial t_j}+nt_j^{n}\dd_{ji}.
	\end{aligned}
\end{align*}
Since $f_n|_{\mathcal{U}_{ij}}=\pr_{2i}\circ \left(\phi_i|_{\mathcal{U}_{ij}}\right) = \pr_{2j}\circ \left(\phi_j|_{\mathcal{U}_{ij}}\right)$, we get
\begin{displaymath}
	\left.\phi_i^\ast\left(t_i\frac{\partial}{\partial t_i}\right)\right|_{\mathcal{U}_{ij}} - \left.\phi_j^\ast\left(t_j\frac{\partial}{\partial t_j}\right)\right|_{\mathcal{U}_{ij}} = \phi_i^\ast\left[\left(n t_j^n\dd_{ji}\right)|_{\mathcal{U}_{ij}}\right] = n\tau^n\phi_i^\ast\dd_{ji}.
\end{displaymath}
As a result, we obtain a $1$-cocycle $\left(n\tau^n\phi_i^\ast\dd_{ij}\right)_{i,j}\in \bigoplus_{i,j}\Gamma(\mathcal{U}_{ij},\shT_{\XX_n/S_n})$. It is the image of $t\frac{\dd}{\dd t}\in H^0(\XX_n,f_n^\ast\shT_{S_n/k})$ under the map $H^0(\XX_n,f_n^\ast\shT_{S_n/k})\rightarrow H^1(\XX_n,\shT_{\XX_n/S_n})$ arising from the long exact sequence. This is the morphism of $\Gamma(S_n,\OO_{S_n})$-modules that makes the diagram
\begin{equation*}
\begin{tikzcd}
	\widetilde{H^0(\XX_n,f_n^\ast\shT_{S_n/k})}\ar{d}{\simeq}\ar[r]& \widetilde{H^1(\XX_n,\shT_{\XX_n/S_n})}\ar{d}{\simeq}\\ (f_n)_\ast f_n^\ast\shT_{S_n/k}\ar{r}{\delta_{\XX/S}} & \RR^1f_\ast\shT_{\XX_n/S_n}
\end{tikzcd}
	%\xymatrix{\widetilde{H^0(\XX_n,f_n^\ast\shT_{S_n/k})}\ar[d]_\simeq\ar[r]& \widetilde{H^1(\XX_n,\shT_{\XX_n/S_n})}\ar[d]^\simeq\\ (f_n)_\ast f_n^\ast\shT_{S_n/k}\ar[r]^{\delta_{\XX/S}}& \RR^1f_\ast\shT_{\XX_n/S_n}}
\end{equation*}
commute. Furthermore, $\tau\frac{\dd}{\dd \tau}\in\Gamma(\XX_n,f_n^\ast\shT_{S_n/k})=\Gamma(S_n,(f_n)_\ast f_n^\ast\shT_{S_n/k})$ is clearly in the image of $\shT_{S_n/k}\rightarrow (f_n)_\ast f_n^\ast\shT_{S_n/k}$. Therefore, since $S$ is smooth, $\delta_{\XX_n/S_n}=\delta_{\XX/S}|_{S_n}$ and by Proposition \ref{kodairavanishes}, $\delta_{\XX_n/S_n}\left(t\frac{\dd}{\dd t}\right)=0$. As a result, the cohomology class of $\left(n\tau^n\phi_i^\ast\dd_{ji}\right)_{i,j}\in \bigoplus_{i,j}\Gamma(\mathcal{U}_{ij},\shT_{\XX_n/S_n})$ is trivial. Thus, since $\ch k=0$ and $n$ is invertible, the class $[\phi_i^\ast\dd_{ij}]_{i,j}=-[\phi_i^\ast\dd_{ji}]_{i,j}$ is also a 1-coboundary. However, by the construction of derivations $\dd_{ij}$, this implies that $f_n$ is trivial as a deformation of $f_{n-1}$. In other words $\phi$ lifts to a morphism $\widetilde{\phi}:\XX_n\rightarrow X\times S_n$ which completes the inductive step and proves that $\XX/S$ is formally trivial at $s$.
\end{proof}
\begin{remark}\label{useofchar0}
	Observe that the last part of the proof of Theorem \ref{mainthm} relies on the assumption that $\chr k=0$.
\end{remark}
\section{Proof of Theorem \ref{mainthm} when $\chr k=p>0$}\label{mainresult2}
In this section, we prove Theorem \ref{mainthm} in the case when $\chr k=p>0$. Our main idea is the same as in the proof of characteristic zero case, i.e. we deduce the result from vanishing of $\KS_f: \shT_{S}\to \RR^1f_\ast \shT_{\X/S}$. However, in order to prove formal triviality of $\XX/S$, we observe that \'etale locally, our family can be descended along the absolute Frobenius $F_S: S\to S^{(1)}$. This is inspired by the following result appearing in the work of Ogus.
\begin{lemma}[{\cite[Lemma 3.5]{ogus}}]\label{frobeniusdescend}
	Suppose $f:\XX\to S=\Spec k[[t_1,t_2,\ldots, t_r]]$ is smooth and proper, let $F_S:S\to S^{(1)}$ be the Frobenius map. Suppose that the Kodaira--Spencer class $\xi\in H^1(\XX,\shT_{\XX/S})\otimes \Omega^1_{S/k}$ is trivial. Then $\XX/S$ can be descended to $S^{(1)}$, meaning that there exists $\XX^{(1)}/S^{(1)}$ and a Cartesian square
	\begin{equation*}
		\begin{tikzcd}
			\XX\ar{r}\ar{d}[swap]{f} & \XX^{(1)}\ar{d}{f^{(1)}}\\[1 em] S\ar{r}[swap]{F_S} & S^{(1)}.
		\end{tikzcd}
	\end{equation*}
\end{lemma}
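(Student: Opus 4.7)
Plan. The strategy is to build $\XX^{(1)}/S^{(1)}$ formally via Frobenius-descent along the $\mathfrak{m}$-adic filtration, then invoke Grothendieck's existence theorem to algebraize. Since $k$ is perfect, I canonically identify $S^{(1)}_n:=\Spec A/\mathfrak m^{n+1}$ with $S_n$, write $\XX_n=\XX\times_S S_n$, and let $F_{S_n}\colon S_n\to S^{(1)}_n$ denote the absolute Frobenius, given on functions by $t_i\mapsto t_i^p$. The induction goal is a compatible system of smooth proper deformations $\XX^{(1)}_n\to S^{(1)}_n$ of $X_0$ together with isomorphisms $\alpha_n\colon F^*_{S_n}\XX^{(1)}_n\xrightarrow{\sim}\XX_n$. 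The base case $n=0$ is immediate with $\XX^{(1)}_0=X_0$.

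The \textbf{key structural observation} is that for $n\geq 1$ and $p\geq 2$ one has $pn\geq n+1$, so $F^*_{S_n}$ sends $\mathfrak m^n$ to zero in $A/\mathfrak m^{n+1}$; more generally, $F_{S_n}$ factors through the closed subscheme $S_{\lfloor n/p\rfloor}\hookrightarrow S_n$. Consequently, for any lift $\XX^{(1)}_n$ of $\XX^{(1)}_{n-1}$, the pullback $F^*_{S_n}\XX^{(1)}_n$ depends only on $\XX^{(1)}_{\lfloor n/p\rfloor}$ and is \emph{insensitive} to the top-order deformation choice, which is parametrized by the torsor $H^1(X_0,\shT_{X_0})\otimes \mathfrak m^n/\mathfrak m^{n+1}$.

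Given any lift $\XX^{(1)}_n$, the deformation $F^*_{S_n}\XX^{(1)}_n$ of $\XX_{n-1}$ to $S_n$ therefore differs from $\XX_n$ by a class
\[
	\delta_n \in H^1(X_0,\shT_{X_0})\otimes \mathfrak m^n/\mathfrak m^{n+1}
\]
depending only on previously constructed data and the given $\XX_n$. The \textbf{heart of the argument} is identifying $\delta_n$ with the value of the Kodaira-Spencer class $\xi$ along an appropriate direction in $\Omega^1_{S/k}/\mathfrak m^n\Omega^1_{S/k}$: on an affine cover $\{\mathcal U_i\}$ of $\XX_n$ I would write explicit trivializations of $\XX_n|_{\mathcal U_i}$ and of $F^*_{S_n}\XX^{(1)}_n|_{\mathcal U_i}$ (the latter existing precisely because $F^*_{S_n}$ collapses high-order data down to level $\lfloor n/p\rfloor$), then compare the resulting transition cocycles and identify their difference as a \v{C}ech representative for the Kodaira-Spencer obstruction, in parallel with the cocycle manipulation of the characteristic-zero argument of Section~\ref{mainresult1}. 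The hypothesis $\xi=0$ then forces $\delta_n=0$, and a compatible $\alpha_n$ extending $\alpha_{n-1}$ can be produced within the allowed torsor.

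Taking the $\mathfrak m$-adic limit yields a smooth proper formal scheme $\widehat\XX^{(1)}\to\Spf A$, which by \textbf{Grothendieck's existence theorem} for proper morphisms over a complete local noetherian ring algebraizes to the desired smooth proper $\XX^{(1)}\to S^{(1)}$ with $F_S^*\XX^{(1)}\simeq\XX$. I expect the main difficulty to be the precise cohomological identification of $\delta_n$ with a component of the Kodaira-Spencer class, together with ensuring the $H^2$-obstruction to the existence of lifts $\XX^{(1)}_n$ vanishes; the latter should follow from the vanishing of $\xi$ by compatibility of the obstruction theory with Frobenius pullback, or can be side-stepped by readjusting $\XX^{(1)}_{n-1}$ within its deformation torsor before attempting the next lift.
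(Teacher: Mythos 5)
The paper does not actually prove this lemma --- it is quoted from Ogus --- so your proposal has to stand on its own, and as written it has a genuine gap at its heart. The bookkeeping is fine (the factorization of $F_{S_n}$ through $S^{(1)}_{\lfloor n/p\rfloor}$, the torsor of lifts, algebraization at the end), but the central claim --- that $\delta_n$ is identified with a value of the Kodaira--Spencer class, so that $\xi=0$ forces $\delta_n=0$ for every $n$ --- cannot be correct. Since you allow $\delta_n$ to be defined for an \emph{arbitrary} previously constructed $\XX^{(1)}_{n-1}$ and arbitrary lift $\XX^{(1)}_n$, taking all $\XX^{(1)}_n$ to be the trivial deformations $X_0\times S^{(1)}_n$ would then give $\XX_n\simeq X_0\times S_n$ for all $n$, i.e.\ formal triviality of $\XX/S$. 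That is strictly stronger than the lemma and false in characteristic $p$: the Frobenius pullback of a non-isotrivial family (say of curves of genus $\geq 2$) over $S^{(1)}$ has $\xi=0$ but is not formally trivial, and such families are exactly the ones the lemma is about. Concretely, the failure occurs in the graded pieces of $\mathfrak m^n/\mathfrak m^{n+1}$ spanned by $p$-th power monomials (for $r=1$, at every $n$ divisible by $p$): there $\delta_n$ is in general nonzero, is not computed by $\xi$, and must instead be \emph{absorbed} by the choice of the new order-$n/p$ term of $\XX^{(1)}$, which is precisely the graded piece the Frobenius map $\mathfrak m^{n/p}_{S^{(1)}}/\mathfrak m^{n/p+1}_{S^{(1)}}\to \mathfrak m^{n}/\mathfrak m^{n+1}$ hits. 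Your own key observation shows that at the remaining steps there is \emph{no} freedom at all (the pullback only sees the already-fixed $\XX^{(1)}_{\lfloor n/p\rfloor}$), so there one must genuinely prove $\delta_n=0$; this is where $\xi=0$ enters, via a cocycle computation as in Section \ref{mainresult1}, and it goes through exactly because the exponent one divides by is prime to $p$ (cf.\ Remark \ref{useofchar0}). The proposal never makes this dichotomy, and the fix you suggest --- readjusting $\XX^{(1)}_{n-1}$ --- cannot supply the missing freedom: changing the order-$(n-1)$ term of $\XX^{(1)}$ only affects Frobenius pullbacks in orders $\geq p(n-1)$.

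Two further points would also need repair. First, the existence of the lifts $\XX^{(1)}_n$ (obstruction in $H^2(X_0,\shT_{X_0})\otimes\mathfrak m^n/\mathfrak m^{n+1}$) is not established: ``compatibility of the obstruction theory with Frobenius pullback'' gives nothing, because Frobenius kills the relevant conormal direction, so vanishing of the pulled-back obstruction is vacuous; in a correct argument one constructs $\XX^{(1)}_n$ together with its comparison to $\XX$ (e.g.\ by descending explicit gluing/cocycle data), rather than lifting abstractly and comparing afterwards. Second, the final algebraization is not covered by Grothendieck's existence theorem as invoked: the lemma assumes only that $f$ is proper, and algebraizing the formal scheme $\widehat{\XX}^{(1)}$ requires a compatible ample line bundle (or a different descent argument, e.g.\ exploiting that $F_S$ is finite flat), so this step needs either a polarization on the descended formal family or a separate justification.
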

Yet, the above cannot be applied directly to $\XX/S$ with $S$ (locally) of finite type over $k$. For the purpose of isotriviality, however, it is enough to descend along Frobenius only \emph{\'etale-locally} on $S$. Also, in order to apply this technique inductively, we need to ensure that there exists an \'etale neighborhood of any given point $s\in S(k)$, such that we can force our local Frobenius descent of $\X$ to be smooth projective and fibrewise trivial. Combining Lemma \ref{frobeniusdescend} with Popescu's Theorem, we propose the following workaround of this issue.
\begin{prop}\label{ogus-refinement}
	Let $S$ be a smooth curve, $s\in S(k)$ and $f:\XX\to S$ be a smooth projective morphism. Suppose the Kodaira--Spencer map $\KS_f:\shT_{S}\to \RR^1f_\ast \shT_{\XX/S}$ vanishes. Then there exists an \'etale neighborhood $s\to U\to S$ such that $f_U:\XX_U\to U$ descends to a smooth, projective, fibrewise trivial family $f^{(1)}:\XX^{(1)}\to U^{(1)}$ along the Frobenius $F_U:U\to U^{(1)}$.
\end{prop}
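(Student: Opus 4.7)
The plan is to reduce to Ogus's original lemma by passing to the complete local ring at $s$, and then transfer the resulting Frobenius descent back down to an étale neighborhood using Popescu's smoothing theorem. First, let $\widehat{S}_s = \Spec \widehat{\OO}_{S,s}$, isomorphic to $\Spec k[[t]]$ since $S$ is a smooth $k$-curve and $s \in S(k)$, and let $\widehat{f}\colon \widehat{\XX}_s \to \widehat{S}_s$ be the pullback of $f$. The Kodaira--Spencer map for $\widehat{f}$ is the pullback of $\KS_f$, hence vanishes, and so Lemma~\ref{frobeniusdescend} yields a smooth proper $\widehat{\XX}^{(1)}_s \to \widehat{S}^{(1)}_s$ together with a Cartesian square realizing $\widehat{\XX}_s$ as its Frobenius pullback.

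Second, I would spread this descent out to a smooth $S$-scheme. The ring $\OO_{S,s}$ is a DVR, hence excellent, so the completion map $\OO_{S,s} \to \widehat{\OO}_{S,s}$ is regular. By Popescu's theorem~\cite[Tag \href{https://stacks.math.columbia.edu/tag/07GC}{\texttt{07GC}}]{stacks}, $\widehat{\OO}_{S,s}$ is a filtered colimit of smooth $\OO_{S,s}$-algebras $R_\alpha$. Standard limit arguments for finitely presented schemes then descend the entire Ogus package --- the finite type scheme $\widehat{\XX}^{(1)}_s$, its structure morphism, the Frobenius descent isomorphism, and the Cartesian square --- to data $\XX^{(1)}_{T_\alpha} \to T^{(1)}_\alpha$ over $T_\alpha = \Spec R_\alpha$ for some $\alpha$. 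After enlarging $\alpha$ if necessary, smoothness and projectivity spread out as open conditions, and fibrewise triviality of $\XX^{(1)}_{T_\alpha}$ with fiber $X$ is inherited from that of $\XX/S$ under base change together with the Frobenius-twist identification $X \cong X^{(p)}$, valid since $k$ is perfect.

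Third, I would cut $T_\alpha$ down to an étale $S$-scheme. The composition $\widehat{S}_s \to T_\alpha$ picks out a $k$-point $t_\alpha \in T_\alpha$ above $s$, and $T_\alpha \to S$ is smooth of some relative dimension $d$ near $t_\alpha$. Choosing $g_1, \ldots, g_d \in \mathfrak{m}_{t_\alpha}$ whose differentials form a basis of the relative cotangent space at $t_\alpha$, the zero locus $V(g_1, \ldots, g_d) \subset T_\alpha$ is étale over $S$ in an open neighborhood $U$ of $t_\alpha$ by the standard smooth-to-étale slicing criterion. Restricting the Frobenius descent data from $T_\alpha$ to $U$ produces the required $\XX^{(1)}_U \to U^{(1)}$. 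The principal technical obstacle lies in Step~2: one must descend the scheme $\widehat{\XX}^{(1)}_s$, its descent isomorphism, and the Cartesian Frobenius diagram to the same $R_\alpha$, and verify that smoothness, projectivity and, most delicately, fibrewise triviality survive the approximation; the last is not an open condition a priori, but here it follows automatically from fibrewise triviality of $\XX/S$ combined with the Frobenius-twist identification above.
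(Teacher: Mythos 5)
Your argument is essentially the paper's: apply Ogus' Lemma \ref{frobeniusdescend} over $\widehat\OO_{S,s}$, use Popescu to write $\widehat\OO_{S,s}$ as a filtered colimit of smooth algebras, spread out the descended scheme together with the descent isomorphism by standard limit arguments (the paper does this in two steps, via EGA IV limit results for the scheme and local finite presentation of the $\Isom$ functor for the isomorphism), and then pass from the smooth auxiliary scheme to an \'etale neighborhood of $s$; your slicing by $g_1,\ldots,g_d$ is the same maneuver as the paper's choice of an \'etale $U\to S$ admitting a section into $\Spec C$. One caveat: the justification ``$X\cong X^{(p)}$, valid since $k$ is perfect'' is false as stated --- perfectness gives an abstract isomorphism, not a $k$-isomorphism (e.g.\ an elliptic curve with transcendental $j$-invariant is not $k$-isomorphic to its Frobenius twist). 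It is also unnecessary: with the relative Frobenius $F_U\colon U\to U^{(1)}$, which is a $k$-morphism bijective on $k$-points with identity maps on residue fields, the fibre of $\XX^{(1)}$ at $F_U(u)$ is $k$-isomorphic to the fibre $\XX_u$, so fibrewise triviality with fibre $X$ (assuming, as in the application, that $\XX/S$ is fibrewise trivial with fibre $X$) follows directly.
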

\begin{proof}
	We may assume that $S$ is affine, say $S=\Spec A$. By Popescu's Theorem \cite[Tag \href{https://stacks.math.columbia.edu/tag/07GC}{\texttt{07GC}}]{stacks} the ring homomorphism
	\begin{equation*}
		\Gamma(S,\OO_{S})=:A\to \widehat\OO_{S,s}
	\end{equation*}
	is a colimit of a filtered diagram $\mathcal D$ of smooth $A$-algebras. By \cite[8.9.1.(iii)]{EGA43} there exists $R_0\in \mathcal D$ and a model $\widehat f^{(1)}_0: \mathscr{X}_0\to \Spec R_0^{(1)}$ of $\widehat{f}^{(1)}$. Then $\widehat\OO_{S,s}$ is a filtered colimit of $R_0$-algebras $R_\alpha$ which are smooth $A$-algebras. Let \[\mathscr{X}_\alpha:=\mathscr{X}_0\otimes_{R_0^{(1)}}R_\alpha^{(1)}\xrightarrow{\widehat f_\alpha^{(1)}}\Spec R_\alpha^{(1)}.\] Then, by \cite[8.10.5.(xiii)]{EGA43} and \cite[17.7.8.(ii)]{EGA44} there exists $\alpha$ such that $\widehat f_\alpha^{(1)}\colon\mathscr X_\alpha\to \Spec R_\alpha^{(1)}$ is smooth and projective with $\widehat f_\alpha^{(1)}\otimes_{R_\alpha^{(1)}}\widehat \OO_{S,s}^{(1)}=\widehat f^{(1)}$. Denote $R=R_\alpha$ and $\mathscr X^{(R)}=\mathscr{X}_\alpha$.\medskip
	
	Next, let $\mathcal D_{R/}$ be the coslice category corresponding to $R\in \mathcal D$. Since $\mathcal D$ is filtered, we have $\colim \mathcal D = \colim \mathcal D_{R/}$. Consider the functor $\I = \Isom_{\XX_R, F^\ast_{R}\mathscr X^{(R)}}$ which is locally finitely presented. Recall that we are given an isomorphism	
	\begin{equation*}
		\iota=\left(\XX_R\otimes_R \widehat \OO_{S,s}\xrightarrow{\sim} \widehat \XX\xrightarrow{\sim}F_{\widehat S}^\ast \mathscr X\xrightarrow{\sim} (F_{R}^\ast \mathscr{X}^{(R)})\otimes_R \widehat \OO_{S,s}\right)\in \I(\widehat \OO_{S,s})=\I(\colim \mathcal D_{R/}).
	\end{equation*}
	Since $\I$ is locally of finite presentation, we have $\I(\colim \mathcal D_{R/}) = \colim \I(\mathcal D_{R/})$, so $\iota$ is the image of an element $\iota_C\in \I(C)$ for some $C\in \mathcal D_{R/}$. In other words, after pulling back $\mathscr X^{(R)}$ along $R\to C$, we obtain an isomorphism
	\begin{equation*}
		\iota_C: \XX_C\xrightarrow{\sim} \XX_R\otimes_R C\to (F_R^\ast\mathscr{X}^{(R)})\otimes_R C\xrightarrow{\sim}F_C^\ast(\mathscr{X}^{(R)}\otimes_{R^{(1)}} C^{(1)})
	\end{equation*}
	satisfying $\iota_C\otimes_C\widehat\OO_{S,s}=\iota$. Denote $\mathscr X^{(C)}:=\mathscr{X}^{(R)}\otimes_R C$. Finally, take $U\to S$ to be an \'etale neighborhood of $s$ which admits a section $U\to \Spec C$ over $S$. Then $\X^{(1)}:=\mathscr{X}^{(C)}\times_{C^{(1)}} U^{(1)}$ and $\iota_U:=\iota_C\otimes_C U: \XX_U\xrightarrow{\sim} F^\ast_U\XX^{(1)}$ satisfy the desired properties.
\end{proof}
Now we can finish the proof of Theorem \ref{mainthm}.
\begin{proof}[Proof of Theorem \ref{mainthm} in the case $\chr k=p>0$]
	It suffices to show that $\XX/S$ is formally trivial at every point $s\in S(k)$. Fix such a point $s$, let $\mathfrak m_s$ be the maximal ideal in $\OO_{S,s}$, and let $S_n=\Spec \OO_{S,s}/\mathfrak m_s^{n+1}$. By Theorem \ref{genericisotriviality}, the family $\XX/S$ is generically isotrivial, hence the Kodaira--Spencer map $\KS_f:\shT_{S}\to \RR^1f_\ast\shT_{\XX/S}$ vanishes (Proposition \ref{kodairavanishes}). By Proposition \ref{ogus-refinement} we find that there exists an \'etale neighborhood $U\to S$ of $s$ such that $\XX_U$ descends to some smooth, projective and fibrewise trivial $f^{(1)}:\XX^{(1)}/U^{(1)}$ with fibre $X$ along Frobenius $F_U: U\to U^{(1)}$. Repeating the above argument, we find a sequence of \'etale neighborhoods
	\begin{equation*}
		\cdots \to U_{m+1}\to U_{m}\to U_{m-1}\to \cdots \to U_1\to S
	\end{equation*}
	of $s$ such that $\XX_{U_m}$ descends to some $f^{(m)}:\XX^{(m)}\to U_k^{(m)}$ along $m^{\rm th}$-power of Frobenius $F_{U_m}^m\colon U_m\to U_m^{(m)}$. This in particular shows that $\XX\times_S S_{p^m-1} \simeq X\times_k S_{p^m-1}$ for all $m\in \N$. Therefore $\XX$ is formally trivial at $s$.
\end{proof}
In particular, we obtain the following result.
\begin{cor}\label{isowhendefisprorep}
 Suppose $k=\overline k$, $\trdeg(k/\mathbb F)=\infty$ and $f\colon \XX\to S$ is smooth, projective, fibrewise trivial family with fibre $X$. Suppose furthermore that $\Def_X$ is pro-representable. Then $\XX/S$ is isotrivial.	
\end{cor}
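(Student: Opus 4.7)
The plan is to follow the proof of Theorem~\ref{mainthm}, substituting pro-representability of $\Def_X$ for the reducedness of $\Aut_k^X$. By Proposition~\ref{formalisiso} --- whose proof uses only Grothendieck's Existence Theorem and Artin Approximation --- it suffices to establish formal triviality of $f$ at every $s\in S(k)$. The first conclusion of Theorem~\ref{genericisotriviality} (fppf-local triviality over a dense open) does not require reducedness of $\Aut_k^X$, and, combined with the argument of Proposition~\ref{kodairavanishes} (the image of $\KS_f$ is a subsheaf of the locally free $\RR^1f_\ast\shT_{\XX/S}$ supported on a proper closed subset of the reduced $S$), yields $\KS_f\equiv 0$ on $S$.

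Fix $s\in S(k)$ and set $A:=\widehat{\OO}_{S,s}$. Pro-representability identifies the formal family $\widehat{\XX}\to\Spf A$ with a unique local $k$-algebra homomorphism $\phi\colon R\to A$, and formal triviality of $f$ at $s$ amounts to $\phi$ factoring through the residue field $k$. The vanishing of $\KS_{\widehat f}$ translates, under this dictionary, to the condition $D(\phi(x))=0$ for every $x\in\mathfrak m_R$ and every $k$-derivation $D\in\Der_k(A,A)$.

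In characteristic zero, it is therefore enough to show that in a reduced complete local noetherian $k$-algebra $A$ with residue field $k$ (algebraically closed), the common kernel of all $k$-derivations of $A$ is exactly $k$; locality of $\phi$ then gives $\phi(\mathfrak m_R)=0$. One extends $k$-derivations of $A$ to the total ring of fractions $Q(A)=\prod_{\mathfrak p}\mathrm{Frac}(A/\mathfrak p)$ via the identification $\Der_k(Q(A),Q(A))=Q(A)\otimes_A\Der_k(A,A)$, observes that every such derivation preserves each factor $K_\mathfrak p=\mathrm{Frac}(A/\mathfrak p)$, and invokes the classical fact that in characteristic zero an element of a finitely generated field extension $K/k$ killed by all $k$-derivations of $K$ lies in the algebraic closure of $k$ in $K$, which equals $k$ by assumption. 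In characteristic $p$, one instead follows the positive-characteristic proof of Theorem~\ref{mainthm} essentially verbatim: its iterative Ogus--Frobenius descent relies only on $\KS=0$ (reestablished at each stage by the first part of Theorem~\ref{genericisotriviality} applied to the fibrewise trivial descended family) and on the preservation of fibrewise triviality by Proposition~\ref{ogus-refinement}, neither of which requires reducedness of $\Aut_k^X$; this yields $\XX\times_S S_{p^m-1}\simeq X\times_k S_{p^m-1}$ for every $m\ge 1$ and hence formal triviality at $s$.

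The main obstacle I anticipate is making the characteristic-zero step rigorous: specifically, verifying that the extensions of $k$-derivations from $A$ to each factor $K_\mathfrak p$ of $Q(A)$ exhaust all $k$-derivations of $K_\mathfrak p$, and performing the compatibility check at the closed point that upgrades the pointwise conclusion $\phi(x)\bmod\mathfrak p\in k$ for every minimal prime $\mathfrak p$ to $\phi(x)\in k$ in $A$ itself. A secondary subtlety is to confirm that the iterative Ogus--Frobenius argument in positive characteristic genuinely never appeals to reducedness of $\Aut_k^X$ --- in particular, that Proposition~\ref{ogus-refinement} can be invoked for the families over (not necessarily smooth) finite-type $k$-schemes arising at each stage of the iteration, which should follow because its proof via Popescu approximation does not use the smooth-curve hypothesis in an essential way.
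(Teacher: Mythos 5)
Your route diverges from the paper's, and it has a genuine gap at the point where the hypothesis is supposed to do its work: the claim that $\KS_f\equiv 0$ follows from the first conclusion of Theorem~\ref{genericisotriviality}. In positive characteristic (the only case not already covered by Theorem~\ref{mainthm}, since reducedness of $\Aut^X_k$ is automatic when $\chr k=0$), fppf-local triviality over a dense open $U$ does \emph{not} force the Kodaira--Spencer map to vanish on $U$: if $\psi\colon V\to U$ is an fppf cover trivializing the family, one only gets $\KS_{f_V}=\psi^\ast(\KS_{f_U})\circ d\psi=0$, and $d\psi\colon\shT_V\to\psi^\ast\shT_U$ can be zero (covers of Frobenius type). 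This is not a technicality one can argue around: the family of Section~\ref{KSnotzero} is smooth, projective, fibrewise trivial, and becomes trivial after pullback along the fppf $\mu_p$-torsor $S'\to S$, yet its Kodaira--Spencer map is nowhere zero. Since your characteristic-$p$ argument --- including the re-establishment of $\KS=0$ at each stage of the Frobenius-descent iteration --- invokes only fppf-local generic triviality and never uses pro-representability at this step, it collapses exactly there. Two further problems: without reducedness of $\Aut^X_k$ you cannot use Proposition~\ref{isoimpliesflat} to reduce to a smooth curve, and Proposition~\ref{ogus-refinement} cannot be applied over general finite-type bases, because the underlying Lemma~\ref{frobeniusdescend} needs the complete local base to be a power series ring (a smooth point); so the smooth-curve hypothesis is essential, contrary to your closing remark. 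Your characteristic-$0$ substitute also remains incomplete for the reason you flag yourself (derivations of $\widehat\OO_{S,s}$ extended to the residue fields at minimal primes need not exhaust the derivations of those fields), though that case is in any event subsumed by Theorem~\ref{mainthm}.

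The missing idea --- and the paper's actual proof --- is to spend the hypothesis once and for all at the level of the fibre: pro-representability of $\Def_X$ implies that $\Aut^X_k$ is reduced (Lemma~\ref{lemma:isowhendefisprorep}). Indeed, by \cite[Theorem 2.6.1]{sernesi}, pro-representability forces automorphisms of $X\times_k A$ to lift along every small extension $A'\to A$ in $\Art_k$, i.e. $\Hom_k(\Spec A',\Aut^X_k)\to\Hom_k(\Spec A,\Aut^X_k)$ is surjective for such extensions; this infinitesimal lifting property makes $\Aut^X_k$ formally smooth over the relevant Artinian test rings, hence reduced. The corollary is then an immediate application of Theorem~\ref{mainthm}, with no need to re-thread pro-representability through the proof of the main theorem; I would restructure your argument around this reduction.
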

This follows easily from the following lemma.
\begin{lemma}\label{lemma:isowhendefisprorep}
	Suppose $X$ is smooth projective $k$-scheme such that the functor $\Def_X:\Art_k \to \Sets$ is pro-representable. Then the group scheme $\Aut_k^X$ is reduced.
\end{lemma}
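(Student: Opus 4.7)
The plan is to deduce smoothness of $\Aut_k^X$ at the identity from pro-representability of $\Def_X$, and then to conclude reducedness. Recall that $\Aut_k^X$ is locally of finite type over $k$ (Proposition \ref{prop:isomrepresentable}). Since $k$ is algebraically closed, hence perfect, a locally finite type $k$-group scheme is reduced if and only if it is smooth; moreover, smoothness at the identity $e$ implies smoothness everywhere, because left translation by any $g\in\Aut_k^X(k)$ is an automorphism of $\Aut_k^X$ sending $e$ to $g$, so the smooth locus is an open subset containing the dense set $\Aut_k^X(k)$. Consequently it suffices to verify the infinitesimal lifting criterion at $e$: for every small extension $A'\twoheadrightarrow A$ in $\Art_k$, the map
\[
\ker\bigl(\Aut_k^X(A')\to\Aut_k^X(k)\bigr)\longrightarrow\ker\bigl(\Aut_k^X(A)\to\Aut_k^X(k)\bigr)
\]
should be surjective.

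Next, I would reinterpret these kernels in the language of deformations. For any $R\in\Art_k$, an element of $\ker(\Aut_k^X(R)\to\Aut_k^X(k))$ is exactly an $R$-automorphism of $X\times_k R$ restricting to $\id_X$ on the special fibre, i.e.\ an automorphism of the trivial deformation $\xi_R:=X\times_k R$ viewed as an object of the deformation groupoid of $X$ over $R$. Writing $\Aut_R(\xi_R)$ for this group, it therefore suffices to prove that the restriction map $\Aut_{A'}(\xi_{A'})\to\Aut_A(\xi_A)$ is surjective for every small extension $A'\twoheadrightarrow A$.

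Finally, I would extract this surjectivity from pro-representability. By Schlessinger's theorem \cite[Theorem 2.11]{schless}, pro-representability of $\Def_X$ is equivalent to conditions (H1)--(H4); the bijectivity demanded by (H4),
\[
\Def_X(A'\times_A A')\xrightarrow{\sim}\Def_X(A')\times_{\Def_X(A)}\Def_X(A'),
\]
is the key point. Analysing gluing data, one sees that injectivity of this map over a diagonal pair $([\xi'],[\xi'])$ is equivalent to surjectivity of $\Aut_{A'}(\xi')\to\Aut_A(\xi)$: two ways of gluing $\xi'$ to itself across isomorphisms of the $A$-fibres differ by an element of $\Aut_A(\xi)$, and they produce isomorphic deformations over $A'\times_A A'$ precisely when this difference lies in the image of $\Aut_{A'}(\xi')$. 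Specialising to the trivial deformation $\xi'=\xi_{A'}$ gives exactly the surjectivity required in the previous paragraph. The main obstacle is the explicit verification of the equivalence between (H4) and this automorphism-lifting statement; I expect to either cite the Schlessinger--Rim formulation of pro-representability (see e.g.\ \cite{sernesi}) or to carry out the required gluing analysis directly.
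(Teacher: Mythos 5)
Your proposal is correct and follows essentially the same route as the paper: both deduce from pro-representability (you via Schlessinger's (H4)/the Schlessinger--Rim gluing analysis, the paper by citing \cite[Theorem 2.6.1]{sernesi}) that automorphisms of the trivial deformation lift along small extensions, identify these automorphism groups with points of $\Aut^X_k$ over Artinian rings, and conclude reducedness from the resulting infinitesimal smoothness criterion. Your extra step of working with kernels and translating smoothness from the identity to all $k$-points is a harmless variant of the paper's direct use of all $A$-points.
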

\begin{proof}
	Adapting the terminology of \cite{sernesi}, a morphism $\phi:A'\to A$ in $\Art_k$ is called a \textbf{small extension} if $\ker\phi = (t)$ for some $t\in A'$. Clearly for every $A\in \Art_k$, the quotient morphism $A\to k$ can be written as a finite composition of small extensions.  Since $\Def_X$ is prorepresentable, by \cite[Theorem 2.6.1]{sernesi} we know that given any small extension $A'\to A$, the group homomorphism
	\begin{equation*}
		\Aut_{A'}(X\times_k A')\to \Aut_{A}(X\times_k A)
	\end{equation*}
	is surjective. However, for any $A\in \Art_k$, we have \[\Aut_A(X\times_k A)=\Autt^X_k(\Spec A)=\Hom_k(\Spec A, \Aut^X_k),\]
	thus, for every small extension $\phi\colon A'\to A$, the induced map
\begin{equation}\label{eq:surjectiveonauthom}
	\phi^\ast: \Hom_k(\Spec A', \Aut_{k}^X)\to \Hom(\Spec A, \Aut_k^X)
\end{equation}
is surjective. As a result \eqref{eq:surjectiveonauthom} holds for arbitrary $A'\in \Art_k$ and $A=k$. Hence, denoting with $m_{A'}$ the maximal ideal of $A'$, every morphism $\chi:\Spec A'\to \Aut^X_k$ factors
\begin{equation*}
	\begin{tikzcd}
		\Spec k\ar[hook]{r}\ar[bend left]{rr}[swap]{\phi^\ast(\chi)}\ar[equal]{rd}& \Spec A'\ar{r}{\chi}\ar{d} & \Aut_k^X\\[1 em] &\Spec A/\mathfrak m_{A'}=\Spec k\ar[dashed]{ru}&
	\end{tikzcd}
\end{equation*}
and thereby proves that $\Aut^X_k$ is reduced.
\end{proof}
\begin{proof}[Proof of Corollary \ref{isowhendefisprorep}]
	It is a formal consequence of Corollary \ref{lemma:isowhendefisprorep} and Theorem \ref{mainthm}.
\end{proof}
\section{Fibrewise trivial family with non-vanishing Kodaira-Spencer map.}\label{KSnotzero}
In this section, we employ quotients of trivial families by a group action in order to construct fibrewise trivial families $f\colon\XX\to S$ over a smooth curve the Kodaira--Spencer map $\KS_f(s): \shT_{S}(s)\to H^1(\XX_s,\shT_{\XX_s})$ is nonzero at every $s\in S$. In particular, such a family is not isotrivial, thus showing that the assumption that $\Aut^X_k$ is reduced in Theorem \ref{mainthm} is necessary.

\subsection{Quotients of trivial families by diagonal group actions}\label{diagonalquotients} 

Let $S$ and $X$ be two $k$-schemes such that $X$ is smooth and projective. Then the group scheme $\Aut^X_k$ exists. Let $G$ be a group scheme with a group scheme homomorphism $\iota: G\to \Aut^X_k$, which in turn (by the exponential law \eqref{eq:explaw}) induces a left group action $\mu: G\times X\to X$. Given $g:T\to G$, we write $\iota(g): T\times X\to T\times X$ the $T$-automorphism associated to $\iota\circ g: T\to \Aut^X_k$. Then, by the adjunction \eqref{eq:explaw} we have that the square
\begin{equation*}
	\begin{tikzcd}
		T\times X\ar{d}[swap]{g\times \id_X}\ar{r}{\iota(g)} & T\times X\ar{d}{\pr_2}\\[1 em] G\times X\ar{r}[swap]{\mu} & X
	\end{tikzcd}
\end{equation*}
commutes. Let $S'\to S$ be an fppf $G$-torsor, $m: S'\times G\to S'$ be the associated right $G$-action on $S'$. Then, we equip $S'\times X$ with \textbf{diagonal (right) action} of $G$: $((s',x),g)\mapsto (sg, \iota(g)^{-1}x)$.
Suppose furthermore that the fppf quotient $(S'\times X)/G$ is representable by a scheme $\XX$. Observe that we have the following commutative square
\begin{equation}\label{eq:diagonalaction}
	\begin{tikzcd}
		S'\times G\times X\ar{r}{\id_{S'}\times \mu}\ar{d}[swap]{m\times \id_X} & S'\times X\ar{d}{\pi}\\ [1 em] S\times X\ar{r}[swap]{\pi} & \XX.
	\end{tikzcd}
\end{equation}
Denote by $f:\XX\to S$ the induced map on quotients so that the following square
\begin{equation*}
	\begin{tikzcd}
		S'\times X\ar{r}{\pi}\ar{d}[swap]{\pr_1} & \XX\ar{d}{f}\\[1 em] S'\ar{r}[swap]{q} &S
	\end{tikzcd}
\end{equation*}
commutes with the horizontal arrows being quotient maps. Then, denoting $q^\ast \XX$ the pullback of $\XX$ along $q$, we find that the induced canonical morphism $\phi: S'\times_k X\to q^\ast \XX$ is an ($G$-equivariant) isomorphism, i.e. the above square is cartesian.

Similarly, it is clear that the morphism $q^\ast f: q^\ast\XX\to S'$ is $G$-equivariant as well. The isomorphism $\phi$ induces a morphism $[\phi]:S'\to \I:=\Isomm^{S\times X,\XX}_S$ of $S$-schemes. We know that $S'$ is equipped with right $G$-action and at the same time, $\mathcal H:=\Hom_S^{S\times X, \XX}$ is equipped with right $\Aut_k^X$-action $\nu: \mathcal H\times_k\Aut^X_k\to \mathcal H$ defined as follows: for $\sigma\in \Aut_k^X(T)$ and $(\alpha: T\times X \to T\times_S \XX)\in \I(T)$ we take
\begin{equation}\label{eq:group-action-isom}
	\nu(\alpha,\sigma): T\times X\xrightarrow{\sigma} T\times X\xrightarrow{\alpha} T\times_S \XX .
\end{equation}
This action obviously restricts to $\I=\Isom^{S\times X,\XX}_S$ which is an open subscheme of $\mathcal H$. This $\Aut^X_k$-action on $\I$ makes $\I$ into $\Aut^X_k$-torsor, as it is trivialised by the fppf cover $S'\to S$. With a slight abuse of notation, we will denote this action by $\nu$ as well. We claim that the two group actions on $S'$ and $\I$ are compatible with respect to $[\phi]:S'\to \I$ and $\iota$.\medskip

\begin{prop}\label{equivariantmap}
	Under the assumptions made in \ref{diagonalquotients}, the morphism $[\phi]:S'\to \I$ is equivariant with respect to $\iota:G\to \Aut_k^X$, i.e. the following square
\begin{equation}\label{eq:desired-comm-sq}
	\begin{tikzcd}
		S'\times G\ar{r}{([\phi],\iota)}\ar{d}{m} &[1 em] \I\times_k\Aut_k^X\ar{d}{\nu}\\[2 em] S'\ar{r}{[\phi]} & \I
	\end{tikzcd}
\end{equation}
is commutative.
\end{prop}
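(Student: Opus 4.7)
The plan is to verify commutativity of \eqref{eq:desired-comm-sq} by Yoneda, testing on $T$-points for every $k$-scheme $T$, and then reducing the resulting identity to the $G$-invariance of the quotient map $\pi$ that is already recorded in \eqref{eq:diagonalaction}.

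First, I would fix a $T$-point $(s',g)\in (S'\times G)(T)$ and compute each composite in terms of the corresponding $T$-isomorphism $T\times X\to T\times_S\XX$ provided by the exponential law \eqref{eq:explaw}. Going via the lower-left corner, one obtains $[\phi](s'\cdot g)\in\I(T)$; since $\phi$ sends $(s',x)\in S'\times X$ to $\bigl(s',\pi(s',x)\bigr)\in S'\times_S\XX=q^\ast\XX$, the associated $T$-isomorphism is $(t,x)\mapsto\bigl(t,\pi(s'(t)g(t),x)\bigr)$. Going via the upper-right corner, definition \eqref{eq:group-action-isom} gives $\nu([\phi](s'),\iota(g))=[\phi](s')\circ\iota(g)$; since $\iota(g)$ sends $(t,x)$ to $(t,\mu(g(t),x))$, this composition equals $(t,x)\mapsto\bigl(t,\pi(s'(t),\mu(g(t),x))\bigr)$.

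Since both $T$-morphisms into $T\times_S\XX$ have first component $\id_T$, commutativity of \eqref{eq:desired-comm-sq} on $T$-points reduces to equality of their second components as morphisms $T\times X\to\XX$. The key observation is that these two morphisms are precisely the two composites obtained by pulling back the two sides of \eqref{eq:diagonalaction} along $(s',g,\id_X)\colon T\times X\to S'\times G\times X$. Since \eqref{eq:diagonalaction} commutes by construction, being exactly the $G$-invariance of $\pi$ with respect to the diagonal $G$-action on $S'\times X$, the pulled-back morphisms agree. By Yoneda, the square \eqref{eq:desired-comm-sq} commutes, which is what we want.

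I do not anticipate any substantive obstacle. The content of the proposition is a direct bookkeeping translation between (i) the definition of $\phi$ as the canonical morphism arising from the quotient presentation $\XX=(S'\times X)/G$, (ii) the definition of the automorphism $\iota(g)$ of $T\times X$ for $g\colon T\to G$, and (iii) the definition of the right $\Aut_k^X$-action $\nu$ on $\I$. The only care required is to keep the first and second projections (to $T$ and to $\XX$) straight throughout the chase.
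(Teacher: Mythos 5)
Your proposal is correct and follows essentially the same route as the paper: checking on $T$-points via Yoneda, translating both composites through the exponential law into morphisms $T\times X\to\XX$, and reducing the equality to the $G$-invariance of $\pi$ recorded in \eqref{eq:diagonalaction}. The only cosmetic difference is that the paper first passes through the monomorphism $\I\hookrightarrow\mathcal H$ to invoke \eqref{eq:explaw} for the Hom scheme, a step you leave implicit.
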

\begin{proof}Since $\I\subset \mathcal H$ is a monomorphism, to prove the desired claim it suffices to show that the composition of the two squares below
\begin{equation*}
	\begin{tikzcd}
		S'\times G\ar{r}{([\phi],\iota)}\ar{d}[swap]{m} &[1 em] \I\times_k\Aut_k^X\ar{d}{\nu}\ar[hook]{r} & \mathcal H\times_k \Aut^k_X\ar{d}{\nu} \\[2 em] S'\ar{r}[swap]{[\phi]} & \I \ar[hook]{r}& \mathcal H
	\end{tikzcd}
\end{equation*}
is commutative. This in turn is equivalent with proving that
\begin{equation*}
	\begin{tikzcd}
		\Hom(T,S')\times \Hom(T,G)\ar{r}{([\phi],\iota)}\ar{d}{m} &[1 em] \Hom(T,\mathcal H)\times \Hom(T,\Aut_k^X)\ar{d}{\nu} \\[2 em] \Hom(T,S')\ar{r}{[\phi]} & \Hom(T,\mathcal H)
		\end{tikzcd}
\end{equation*}
commutes, which, due to the adjunction \eqref{eq:explaw}, is equivalent with proving the commutativity of the square
\begin{equation}\label{eq:commutativity2}
	\begin{tikzcd}
		\Hom(T,S')\times \Hom(T,G)\ar{r}{([\phi],\iota)}\ar{d}{m} &[1 em] \Hom(T\times X,\XX)\times \Hom(T,\Aut_k^X)\ar{d}{\nu} \\[2 em] \Hom(T,S')\ar{r}{[\phi]} & \Hom(T\times X,\XX).
	\end{tikzcd}
\end{equation}
Here, the upper horizontal arrow sends $(s',g)\in \Hom(T,S')\times \Hom(T,G)$ to the pair
\begin{equation*}
	\begin{tikzcd}
		T\times X\ar{r}{s'\times \id_X} & S'\times X\ar{r}{\phi}[swap]{\sim} & S'\times_S \XX\ar{r}{\pr_2}& \XX, & \iota\circ g: T\ar{r} & \Aut^X_k,
	\end{tikzcd}
\end{equation*}
and the action (right vertical arrow) sends $(\Phi: T\times X\to \XX, \sigma\in \Aut^k_X(T))$ to the composition
\begin{equation*}
	\begin{tikzcd}
		T\times X \ar{r}{\sigma} & T\times X\ar{r}{\Phi} & \XX.
	\end{tikzcd}
\end{equation*}
Therefore, the clockwise composition sends $(s',g)$ to the composition
\begin{equation*}
	\begin{tikzcd}
		T\times X \ar{r}{\iota(g)} & T\times X\ar{r}{(s'\times \id_X)} & S'\times X\ar{r}{\phi} &S'\times_S \XX \ar{r}{\pr_2} & \XX.
	\end{tikzcd}
\end{equation*}
Meanwhile, the counter-clockwise composition sends $(s',g)$ to
\begin{equation*}
	\begin{tikzcd}
		T\times X\ar{r}{(s', g)\times \id_X} &[1.5 em] S'\times G\times X \ar{r}{m\times\id_X} &[1 em] S'\times X\ar{r}{\phi} & S'\times_S \XX\ar{r}{\pr_2} & \XX.
	\end{tikzcd}
\end{equation*}
These two compositions are equal because $\pi: S'\times X\xrightarrow{\phi} S'\times_S \XX \xrightarrow{\pr_2}  \XX$ is $G$-invariant, i.e. for any $(s',g)\in S'(T)\times G(T)$ the diagram
\begin{equation*}
	\begin{tikzcd}
		T\times X\ar{r}{(s',g)\times \id_X}\ar[bend left]{rr}{(s',\pr_2\circ\iota(g))}&[2 em] S'\times G\times X \ar{r}{\id_{S'}\times \mu}\ar{d}{m\times\id_X} &[1 em] S'\times X\ar{d}{\pi}\\[1 em] & S'\times X\ar{r}{\pi} & \XX
	\end{tikzcd}
\end{equation*}
commutes (see \eqref{eq:diagonalaction}). This proves commutativity of \eqref{eq:commutativity2} for all $T\in \Sch_S$ which, by Yoneda's Lemma, implies desired commutativity of \eqref{eq:desired-comm-sq} and completes the proof.
\end{proof}

Recall that given a group scheme $G$ and $S$ equipped with a Grothendieck topology $\tau$, isomorphism classes of $G$-torsors with respect to $\tau$ on $S$ are classified by classes in ${\rm H}^1_\tau (S,G)$ \cite[Tag \href{https://stacks.math.columbia.edu/tag/03AJ}{\texttt{03AJ}}]{stacks}.

A formal consequence of this correspondence and Proposition \ref{equivariantmap} is the following statement.
\begin{cor}
	With notation as above, the image of $G$-torsor $S'/S$ under the map \[{\rm H}^1_{\rm fppf}(\iota):{\rm H}^1_{{\rm fppf}}(S,G)\to {\rm H}^1_{\rm fppf}(S,\Aut^X_k)\] coincides with $\I=\Isomm^{\XX,X\times_k S}_S$.
\end{cor}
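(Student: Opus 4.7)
The plan is to exhibit $\I$ as the contracted product $S' \times^G \Aut^X_k$ that represents the pushforward torsor class, with the identification coming directly from the equivariant morphism $[\phi]$ of Proposition \ref{equivariantmap}.

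First, I would recall the standard concrete description of ${\rm H}^1_{\rm fppf}(\iota)$: for any homomorphism $\iota: G \to H$ of fppf group schemes over $S$, the pushforward of a right $G$-torsor $P$ is the fppf quotient $P \times^G H$ of $P \times_k H$ by the diagonal right $G$-action $((p, h), g) \mapsto (p \cdot g,\, \iota(g)^{-1} h)$, equipped with the residual right $H$-action on the second factor. Specialising to our situation, ${\rm H}^1_{\rm fppf}(\iota)([S'])$ is represented by the $\Aut^X_k$-torsor $S' \times^G \Aut^X_k$.

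Next, I would use the $\iota$-equivariant morphism $[\phi]: S' \to \I$ from Proposition \ref{equivariantmap} together with the right $\Aut^X_k$-action $\nu$ on $\I$ to define a morphism of $S$-schemes
\begin{equation*}
\Psi: S' \times_k \Aut^X_k \longrightarrow \I, \qquad (s', \sigma) \longmapsto \nu([\phi](s'), \sigma).
\end{equation*}
The commutativity of the square in Proposition \ref{equivariantmap}, combined with the associativity of $\nu$, shows that $\Psi$ is invariant under the diagonal right $G$-action described above, so $\Psi$ descends to a morphism
\begin{equation*}
\bar{\Psi}: S' \times^G \Aut^X_k \longrightarrow \I
\end{equation*}
of $\Aut^X_k$-torsors over $S$.

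Finally, I would observe that $\bar{\Psi}$ is automatically an isomorphism: both source and target are $\Aut^X_k$-torsors over $S$ and $\bar{\Psi}$ is $\Aut^X_k$-equivariant by construction, so after passing to any fppf cover of $S$ trivialising both torsors, $\bar{\Psi}$ becomes an $\Aut^X_k$-equivariant map between trivial torsors, hence an isomorphism. I do not expect a genuine obstacle here; the statement is a formal consequence of Proposition \ref{equivariantmap} together with the contracted-product description of the pushforward on non-abelian $H^1$. The only mild subtlety is to keep consistent the right-action conventions for $G \curvearrowright S'$ and $\Aut^X_k \curvearrowright \I$ as fixed in Section \ref{diagonalquotients}, which are already aligned with the conventions used in \cite[Tag \texttt{03AJ}]{stacks}.
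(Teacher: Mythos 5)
Your proposal is correct and takes essentially the same route as the paper: the paper offers no argument beyond declaring the corollary a formal consequence of the torsor--${\rm H}^1_{\rm fppf}$ correspondence and Proposition \ref{equivariantmap}, and your contracted-product map $S'\times^G \Aut^X_k \to \I$ induced by $[\phi]$ and $\nu$ is precisely the standard way of spelling out that formal consequence. The verification that $\Psi$ is $G$-invariant and $\Aut^X_k$-equivariant, and that a morphism of $\Aut^X_k$-torsors is automatically an isomorphism, is exactly what is needed, so there is no gap.
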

\subsection{Fibrewise trivial families with nowhere vanishing Kodaira--Spencer map}\label{fib-triv-nonzero-KS} 
Suppose that $\chr k=p>0$ and let $X\in \Sch_k$ be a smooth projective scheme such that the connected component of $\Aut^X_k$ containing $1$ is isomorphic to $\mu_p=:G$. For instance, one can set $\chr k=2$ and choose $X$ to be a generic supersingular Enriques surface \cite[Example 5.6]{martin2020infinitesimal}. Take $S=\mathbb G_m$, $S'=\mathbb{G}_m$, and let $S'\to S$ be the $p^{\rm th}$ power map, which makes $S'$ into a nontrivial $\mu_p$-torsor over $S$. Then the quotient $(S'\times_k X)/G$ by the diagonal action is representable by a scheme $\XX$ \cite[Section 4]{tziolas}. Again, let $f:\XX\to S$ be as in \ref{diagonalquotients}.
\begin{prop}
	Consider the setup as in \ref{fib-triv-nonzero-KS}. Then the Kodaira--Spencer map of $f\colon\XX\to S$ is nowhere vanishing. In particular, $f$ is not isotrivial.
\end{prop}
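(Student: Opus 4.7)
The plan is to show, for every $s\in S(k)$, that the first-order deformation $\XX_1:=\XX\times_S S_1\to S_1$, with $S_1=\Spec \OO_{S,s}/\mathfrak m_s^2$, is nontrivial; since $S$ is a smooth curve, the tangent space $\shT_S(s)$ is one-dimensional and this is equivalent to $\KS_f(s)\ne 0$. Write also $S'_1:=S'\times_S S_1$. An isomorphism $\XX_1\simeq X\times_k S_1$ of $S_1$-schemes is the same as a section of $\I\times_S S_1\to S_1$, where $\I=\Isomm^{\XX,X\times_k S}_S$, so I want to show that $\I_1:=\I\times_S S_1$ has no section over $S_1$.

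The key input is the corollary following Proposition~\ref{equivariantmap}, which identifies $\I$ with the $\Aut^X_k$-torsor $S'\wedge^{\mu_p}\Aut^X_k$ induced from $S'\to S$ along $\iota\colon \mu_p\hookrightarrow \Aut^X_k$. By the universal property of induced torsors, a section of $\I_1\to S_1$ is the same as a $\mu_p$-equivariant morphism $g\colon S'_1\to \Aut^X_k$. Suppose for contradiction such a $g$ exists. Post-composing with the quotient $\Aut^X_k\to \pi_0(\Aut^X_k)$ and using that $\mu_p=(\Aut^X_k)^\circ$ lies in the identity component (so acts trivially on the \'etale quotient), the resulting map $S'_1\to \pi_0(\Aut^X_k)$ is $\mu_p$-invariant and, by fppf descent along $S'_1\to S_1$, factors through $S_1$. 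Since $\pi_0(\Aut^X_k)$ is \'etale over $k$ and $S_1$ is local Artinian, this factorization is constant at some $[g_0]\in \pi_0(\Aut^X_k)(k)$. Consequently $g$ takes values in the coset $g_0\mu_p$; composing with left translation by $g_0^{-1}$ (which preserves $\mu_p$-equivariance) yields a $\mu_p$-equivariant morphism $S'_1\to \mu_p$, and pairing this with the structure map produces a $\mu_p$-equivariant isomorphism $S'_1\xrightarrow{\sim}S_1\times_k \mu_p$, i.e.\ a trivialization of the $\mu_p$-torsor $S'_1\to S_1$.

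It remains to verify that $S'_1\to S_1$ is in fact nontrivial. If $t\in\mathfrak m_s$ is a uniformizer at $s$ and $t'$ is a uniformizer at a chosen preimage $s'\in S'(k)$, then the $p$-th power map $S'\to S$ takes the form $t=t'^p$ after completion at $s'$, so $S'_1=\Spec k[t']/(t'^{2p})\to S_1=\Spec k[t]/(t^2)$ is given by $t\mapsto t'^p$. A section would produce $a\in k[t]/(t^2)$ with $a^p=t$; but writing $a=a_0+a_1 t$ gives $a^p=a_0^p\in k$ (using $p\geq 2$ and $t^2=0$), contradicting $a^p=t$. This gives the required contradiction and proves $\KS_f(s)\ne 0$ for every $s\in S(k)$.

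The last assertion of the proposition, that $f$ is not isotrivial, then follows since isotriviality at $s$ forces $\KS_f(s)=0$ by \cite[Corollary~2.6.11]{sernesi}, as already used in Proposition~\ref{kodairavanishes}. The step I expect to require the most care is the reduction from a $\mu_p$-equivariant map $S'_1\to \Aut^X_k$ to one landing in $\mu_p$: this uses in an essential way the hypothesis that $\mu_p$ is the entire identity component of $\Aut^X_k$, since otherwise a hypothetical section of $\I_1$ could come from a strictly larger subgroup scheme and would not descend to a trivialization of the original $\mu_p$-torsor.
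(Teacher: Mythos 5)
Your argument is correct and follows the same overall strategy as the paper: everything is reduced to showing that $\I=\Isomm^{\XX,X\times_k S}_S$ admits no section over the first-order neighbourhood of $s$, using the identification of $\I$ with the $\Aut^X_k$-torsor induced from $S'\to S$ along $\iota$, together with the explicit nontriviality of the restricted $\mu_p$-torsor (your computation that $a^p=t$ has no solution in $k[t]/(t^2)$ is exactly the paper's observation that $v^\ast[S']$ is a nontrivial class in $H^1_{\rm fppf}(D,\mu_p)$). Where you genuinely diverge is the key step excluding a section of $\I_1$: the paper works at the level of cohomology classes and shows that $H^1_{\rm fppf}(D,\mu_p)\to H^1_{\rm fppf}(D,\Aut^X_k)$ has trivial kernel, invoking Giraud's exact sequence of pointed sets \eqref{eq:longexactsequence-torsors} together with the group-theoretic splitting of $\Aut^X_k\to\Aut^X_k/\mu_p$ coming from $(\Aut^X_k)_{\rm red}\xrightarrow{\sim}\Aut^X_k/\mu_p$; you instead argue directly with the universal property of the contracted product, using the component group to force a hypothetical $\mu_p$-equivariant map $S'_1\to\Aut^X_k$ into a single coset of $\mu_p$ and hence to trivialize $S'_1\to S_1$. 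Your route is more elementary (no nonabelian cohomology exact sequence, no need for the section of the quotient map) and makes transparent exactly how the hypothesis $\mu_p=(\Aut^X_k)^\circ$ enters; the paper's route packages the same input into a kernel statement on $H^1$ that applies verbatim to any base $T$.

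One small slip to fix: translation by $g_0^{-1}$ on the side you chose does \emph{not} in general preserve the equivariance condition; it twists it by the inner automorphism $h\mapsto g_0^{-1}\iota(h)g_0$ of $\mu_p$, which may be nontrivial since $\Aut(\mu_p)\simeq(\Z/p\Z)^\times$ and $g_0$ need not centralize $\mu_p$. This is harmless: either translate by $g_0^{-1}$ on the other side, which preserves the equivariance on the nose and still lands in $\mu_p$ because the identity component is a normal subgroup scheme, or observe that a map $S'_1\to\mu_p$ which is equivariant up to an automorphism of $\mu_p$ still produces a section of $S'_1\to S_1$, so the contradiction with your explicit computation stands. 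You also implicitly lift $[g_0]\in\pi_0(\Aut^X_k)(k)$ to a $k$-point $g_0$ of $\Aut^X_k$; this is fine as $k$ is algebraically closed, but it is worth saying.
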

\begin{proof}
Pick $s\in S(k)$. To show that $\KS_f$ does not vanish at $s$, it suffices to show that $\KS_{(\times s)^\ast f}$ does not vanish at $s=1$ for $(\times s)^\ast f: (\times s)^\ast\XX\to S$ being the pullback of $f$ along $S\xrightarrow{\times s} S$, so we may simply assume that $s=1$.

Let $v:\Spec k[\varepsilon]=:D\to \mathbb G_m=\Spec k[t]$ be given by $t\mapsto 1+\epsilon$. It is enough to show that $v^\ast\XX\to D$ is a non-trivial deformation of $X/k$. For this to hold, it suffices to prove that $\Isomm^{v^\ast\XX, D\times_k X}_D=v^\ast\I \to D$ does not admit a section, where $\I=\Isomm^{\XX,S\times X}_S\to S$. If it did have a section then $v^\ast\I \simeq \Aut^{X}_k\times_k D$ and its cohomology class $[v^\ast \I]={\rm H}_{\rm fppf}^1(\iota)([F])$ would be zero in ${\rm H}^1_{\rm fppf}(D,\Aut^X_k)$. However we have the commutative diagram
\begin{equation}\label{eq:h1fppf}
	\begin{tikzcd}
		H^1_{\rm fppf}(S,G)\ar[hook]{r}{H^1_{\rm fppf}(\iota)}\ar{d}{v^\ast} &[1 em] H^1_{\rm fppf}(S,\Aut^X_k)\ar{d}{v^\ast}\\ [1 em]
		H^1_{\rm fppf}(D,G)\ar[hook]{r}{H^1_{\rm fppf}(\iota)} & H^1_{\rm fppf}(D,\Aut^X_k)
	\end{tikzcd}
\end{equation}
Clearly the image $v^\ast [S']\in H^1_{\rm fppf}(D,G)$ corresponds to the morphism $\Spec k[\epsilon']/((\epsilon')^{2p})\to \Spec k[\epsilon]/(\epsilon^2)$ given by $\epsilon\mapsto (\epsilon')^p$ which is a nontrivial $G$-torsor. It remains to show that the bottom map in \eqref{eq:h1fppf} has trivial kernel --- note that since $G=(\Aut^X_k)^0$, the composition $(\Aut^X_k)_{\rm red}\subset \Aut^X_k\to \Aut^X_k/G$ is an isomorphism, hence the epimorphism in the exact sequence
\begin{equation}\label{eq:shortexactsequence-groupsection}
	0\to G\xrightarrow{\iota} \Aut^X_k\xrightarrow{q} \Aut^X_k/G\to 0
\end{equation}
admits a section. For every $T\in \Sch_k$, we have the induced exact sequence of pointed sets \cite[Ch. III, Prop 3.2.2]{MR0344253}
\begin{equation}\label{eq:longexactsequence-torsors}
	0\to H^0(T,G)\to H^0(T,\Aut^X_k)\to H^0(T,\Aut^X_k/G)\xrightarrow{d} H^1_{\rm fppf}(T,G)\to H^1_{\rm fppf}(T,\Aut^X_k).
\end{equation}
Suppose that $T'/T\in H^1_{\rm fppf}(T,G)$ induces the trivial $\Aut^X_k$-torsor. By \eqref{eq:longexactsequence-torsors} there exists a section $s:T\to \Aut^X_k/G$ such that $d(s)=[T']$. By \eqref{eq:shortexactsequence-groupsection} $s$ lifts to $\widetilde s:T\to \Aut^X_k$, so that $q\widetilde s = s$. Therefore $[T']=d(q_\ast\widetilde s)=\ast$ by exactness of \eqref{eq:longexactsequence-torsors}, proving the triviality of $T'$. Apply this to $T=D$.
\end{proof}
\begin{remark}
	We briefly remark on a slightly different example of a fibrewise trivial family which is not isotrivial. Let $X$ be a surface with ample canonical divisor $K_X$, and such that $H^0(X,\shT_X)\neq 0$. Then the scheme of automorphisms $\Aut^X_k$ is finite and non-reduced. Suppose furthermore that $\Aut^X_k$ contains $\mu_p=\Spec k[t]/(t^p-1)$ as a subgroup.  Here, one can take for instance $p=5$ and a hypersurface in $\PP^3$ of degree $5$ (which is automatically of general type), invariant under the Godeaux action on $\PP^3$ \cite[1]{lang-godeaux}, and on which the restriction of $\mu_5$-action is free. Then for the $\mu_p$-torsor $S'\to S$ as in \ref{fib-triv-nonzero-KS} and the quotient $\XX:=(X\times S')/\mu_p$ by diagonal action of $\mu_p$, the induced fibrewise trivial family $f\colon\XX\to S$ is nowhere isotrivial - one uses \cite[Prop. 6]{kollarmoduli} to show that $\I=\Isomm^{S\times X,\XX}$ is finite over $S$ and, using the same technique, one can prove that isotriviality would in fact imply that $S\times X\simeq \XX$. However, we don't know if its Kodaira--Spencer map is nowhere vanishing.
	\end{remark}
\newcommand{\etalchar}[1]{$^{#1}$}

\end{document}